\definecolor{darkred}{rgb}{0.4,0.1,0.1}
\definecolor{darkblue}{rgb}{0.1,0.1,0.4}
\definecolor{darkgrey}{rgb}{0.5,0.5,0.5}
\numberwithin{equation}{section}
\theoremstyle{plain}% default
\newtheorem{theorem}{Theorem}[section]
\newtheorem{lemma}[theorem]{Lemma}
\newtheorem{proposition}[theorem]{Proposition}
\newtheorem{corollary}[theorem]{Corollary}
\theoremstyle{definition}
\newtheorem{definition}[theorem]{Definition}
\newtheorem{remark}[theorem]{Remark}
\newcommand{\drm}{{\mathrm d}}
\newcommand\cB{\mathcal B}
\newcommand\cG{\mathcal G}
\newcommand\cH{\mathcal H}
\newcommand\cL{\mathcal L}
\newcommand\NN{\mathbb N}
\newcommand\RR{\mathbb R}
\newcommand\fra{\mathfrak a}
\newcommand\eps{\varepsilon}
\renewcommand{\phi}{\varphi}
\newcommand\ov{\overline}
\newcommand\wt{\widetilde}
\newcommand{\defeq}{\mathrel{\mathop:}=}
\newcommand{\rmO}{{\rm O}}
\DeclareMathOperator\dom{dom}
\DeclareMathOperator\ran{ran}
\DeclareMathOperator{\essinf}{ess\,inf}
\newcommand\void[1]{}
\newcounter{counter_a}
\newenvironment{myenum}{\begin{list}{{\rm(\roman{counter_a})}}%
{\usecounter{counter_a}
\setlength{\itemsep}{0.5ex}\setlength{\topsep}{0.7ex}
\setlength{\leftmargin}{7ex}\setlength{\labelwidth}{7ex}}}{\end{list}}
\newcounter{counter_b}
\newenvironment{myenuma}{\begin{list}{{\rm(\alph{counter_b})}}%
{\usecounter{counter_b}
\setlength{\itemsep}{0.5ex}\setlength{\topsep}{0.7ex}
\setlength{\leftmargin}{7ex}\setlength{\labelwidth}{7ex}}}{\end{list}}
\def\frt{{\mathfrak t}}
      \def\dC{{\mathbb C}}
   \def\dN{{\mathbb N}}   
      \def\dR{{\mathbb R}}
\def\dS{{\mathbb S}}      
   \def\dZ{{\mathbb Z}}
   \def\cB{{\mathcal B}}   
\def\cG{{\mathcal G}}   \def\cH{{\mathcal H}}   
      \def\cL{{\mathcal L}}
      \def\cO{{\mathcal O}}
\title{Quasi boundary triples and semibounded self-adjoint extensions}
\author[J.~Behrndt]{Jussi Behrndt}
\address{Technische Universit\"{a}t Graz,
Institut f\"{u}r Numerische Mathematik,
Steyrergasse 30,
8010 Graz, Austria}
\email{behrndt@tugraz.at}
\author[M.~Langer]{Matthias Langer}
\address{Department of Mathematics and Statistics,
University of Strathclyde,
26 Richmond Street, Glasgow G1 1XH, United Kingdom}
\email{m.langer@strath.ac.uk}
\author[V.~Lotoreichik]{Vladimir Lotoreichik}
\address{Department of Theoretical Physics, Nuclear Physics Institute ASCR,
250 68 \v{R}e\v{z} near Prague, Czech Republic}
\email{lotoreichik@ujf.cas.cz}
\author[J.~Rohleder]{Jonathan Rohleder}
\address{Stockholms universitet,
Matematik, 106 91 Stockholm, Sweden}
\email{jonathan.rohleder@math.su.se}
\begin{document}

\begin{abstract}
In this note semibounded self-adjoint extensions of symmetric operators are
investigated with the help of the abstract notion of quasi boundary triples and
their Weyl functions.  The main purpose is to provide new sufficient conditions on
the parameters in the boundary space to induce self-adjoint realizations, and to relate the
decay of the Weyl function to estimates on the lower bound of the spectrum.
The abstract results are illustrated with uniformly elliptic second-order PDEs on
domains with non-compact boundaries.
\end{abstract}

\subjclass[2010]{Primary 35P15, 47F05; Secondary 35P05, 47B25}
\keywords{semibounded operator, boundary triple, Weyl function,
elliptic differential operator, Dirichlet-to-Neumann map}

\maketitle

% *******************************************************************
\section{Introduction}
% *******************************************************************

\noindent
Boundary triple techniques are nowadays a widely used abstract tool in
extension theory and spectral analysis of symmetric and self-adjoint operators.
These methods are inspired by, and can be viewed as, abstract counterparts of
trace maps for ordinary or partial differential operators.
The Titchmarsh--Weyl $m$-function in singular Sturm--Liouville theory and
the Dirichlet-to-Neumann map in the analysis of elliptic differential operators
correspond to the Weyl function associated with such a boundary triple.
We refer the reader to \cite{BGP08,DM91,GG91,S12} for ordinary boundary triples,
typical applications and further references, to \cite[Chapter 13]{G09} and \cite{BGW09,GM11,G68,M10,P08,P13}
for extension theory of partial differential operators, and to
\cite{BL07,BL12,BLL13IEOT,BM14} for the more general notion of
quasi boundary triples and their use in the spectral analysis of
partial differential operators.

The usual starting point is a densely defined, closed, symmetric operator $S$ with equal
deficiency indices $n_\pm(S)$ in a Hilbert space $\cH$ and its adjoint $S^*$.
An \emph{ordinary boundary triple} $\{\cG,\Gamma_0,\Gamma_1\}$ for $S^*$
consists of a Hilbert space $\cG$ with $\dim\cG=n_\pm(S)$
and two boundary mappings $\Gamma_0,\Gamma_1:\dom S^*\rightarrow\cG$ that
satisfy an abstract Lagrange or Green identity
\begin{equation}\label{greenintro}
  (S^*f,g)_\cH-(f,S^*g)_\cH=(\Gamma_1 f,\Gamma_0 g)_\cG-(\Gamma_0 f,\Gamma_1 g)_\cG, \quad
  f,g\in\dom S^*,
\end{equation}
and a maximality condition.
With the help of a boundary triple the self-adjoint
extensions of $S$ in $\cH$ can be parameterized in an efficient
way via abstract boundary conditions in the boundary space $\cG$.
More precisely, the restriction
\begin{equation}\label{abintro}
  A_{[B]} f \defeq S^*f,\qquad
  \dom A_{[B]}=\bigl\{f\in\dom S^*:\Gamma_0 f=B \Gamma_1 f\bigr\},
\end{equation}
of $S^*$ is self-adjoint in $\cH$ if and only if $B$ is a self-adjoint
operator or relation in the boundary space $\cG$.  The spectral properties
of the self-adjoint extensions $A_{[B]}$ can be investigated with the help
of the \emph{Weyl function} $M$ associated with the
boundary triple $\{\cG,\Gamma_0,\Gamma_1\}$.
The values $M(\lambda)$ of the Weyl function are linear
operators in $\cG$ defined by
\begin{equation}\label{mintro}
  M(\lambda):\cG\rightarrow\cG,\quad
  \Gamma_0 f \mapsto \Gamma_1 f, \qquad
  f\in\ker(S^*-\lambda),
\end{equation}
where $\lambda\in\dC$ does not belong to the spectrum of the
self-adjoint extension $A_0 \defeq S^*\upharpoonright\ker\Gamma_0$.
It turns out that $M$ belongs to the class of operator-valued Nevanlinna or
Riesz--Herglotz functions, and, very roughly speaking, the spectral properties
of a self-adjoint extension $A_{[B]}$ of $S$ in \eqref{abintro} are encoded
in the singularities of the function $\lambda\mapsto (B^{-1}-M(\lambda))^{-1}$.

For many purposes and applications the notion of boundary triples and their
Weyl functions is an efficient and most suitable tool, in particular,
for ordinary differential operators and all other extension problems
where the deficiency indices of the underlying symmetric operator are finite.
However, if one tries to apply the boundary triple method to elliptic PDEs
on bounded or unbounded domains with the usual Dirichlet and Neumann trace
as boundary maps and the Dirichlet-to-Neumann map as Weyl function, one
gets into very serious trouble since Green's second identity does not extend to all functions in the maximal domain.
There are various ways to overcome this technical difficulty, see \cite{BL07,BL12,BGW09,GM11,M10,P08,P13,R07} and the classical contributions \cite{G68,V52} for more details.
One possible solution is the concept of \emph{quasi boundary triples},
which is a slight generalization of the notion of boundary triples and
which was proposed in \cite{BL07} and further developed
and applied in, e.g.\ \cite{BL12,BLL13IEOT,BM14}.
The key idea is to define the boundary maps only on a suitable core of the
adjoint operator $S^*$ and to require \eqref{greenintro} to hold only for elements in this core.
The notion of the Weyl function in \eqref{mintro} remains almost the same:
instead of all defect elements $f\in\ker(S^*-\lambda)$, only those
belonging to the core are allowed; see Section~\ref{sec2} below.
However, it is important to note that for quasi boundary triples some of the
striking properties of ordinary boundary triples fail, e.g.\ self-adjointness
of $B$ does not imply, in general, self-adjointness of the extension $A_{[B]}$ in \eqref{abintro}.
Therefore it is desirable to find sufficient conditions for the boundary parameters $B$
to induce self-adjoint extensions $A_{[B]}$ via \eqref{abintro}.
There are some useful sufficient conditions in the literature, most of which rely
on compactness properties of the Weyl function; see, e.g.\ \cite[Theorems 6.20 and 6.21]{BL12} or \cite{BL07,BLL13IEOT,BM14}.

One of the main aims of the present paper
is to provide new sufficient conditions for the
boundary parameter $B$ to induce a self-adjoint extension $A_{[B]}$
via \eqref{abintro} in the framework of quasi boundary triples.
In Theorems~\ref{mainthm} and \ref{mainthm2} and Corollaries~\ref{maincor}
and \ref{maincor2} we drop the above mentioned compactness assumptions and replace
them by a set of abstract conditions.  In the special but important case
when $A_0$ is bounded from below and $M(\lambda)\to0$
in the operator norm as $\lambda \rightarrow - \infty$, these conditions simplify further;
see Theorem~\ref{thm:lowerBound}.
We emphasize that in the present setting also unbounded
self-adjoint operators $B$ are allowed in~\eqref{abintro}.

Our second main objective is to relate decay properties
of the Weyl function associated with an ordinary or quasi boundary triple
to the lower bounds of the spectra of the self-adjoint extensions $A_{[B]}$.
More precisely, since $M$ is a Nevanlinna function, it behaves similarly to
the resolvent of the self-adjoint operator $A_0$.  If $A_0$ is bounded
from below, the decay of the Weyl function $\lambda\mapsto M(\lambda)$
for $\lambda\rightarrow -\infty$ may be like % estimated by
\begin{equation}\label{decayintro}
  \bigl\|\ov{M(\lambda)}\bigr\| = \rmO\biggl(\frac{1}{(\mu-\lambda)^\alpha}\biggr)
  \qquad \text{as}\;\;\lambda\to-\infty,
\end{equation}
for some $\alpha\in(0,1]$ and a certain $\mu \leq \min \sigma(A_0)$.
This leads to an estimate for the lower bound of the self-adjoint
extensions $A_{[B]}$ when the norm of the parameter $B$ tends to $\infty$;
see Theorem~\ref{thm:lowerBound} and Corollary~\ref{cor:Asymptotics}.

Our general considerations and results in Section~\ref{sec2} are partly
inspired by possible applications to elliptic PDEs on unbounded
domains with non-compact boundaries.  In Section~\ref{sec:PDE} we illustrate
our methods with uniformly elliptic second-order
differential expressions with smooth variable coefficients.
The boundary maps $\Gamma_0$ and $\Gamma_1$ are chosen to be
the Neumann and Dirichlet trace, respectively, defined on $H^2(\Omega)$,
which is a core for the maximal operator.  In that case the Weyl function $M$
is the Neumann-to-Dirichlet map, and it is shown in Proposition~\ref{prop:Weyl_elliptic}
that the norms of the closures $\overline{M(\lambda)}$ satisfy \eqref{decayintro}
with $\alpha=\frac{1}{2}$.
As a consequence, the abstract results in Section~\ref{sec2} yield self-adjointness
and an estimate for the lower bounds of the spectra of the
self-adjoint realizations $A_{[B]}$ in terms of the
boundary parameter $B$ in Theorem~\ref{thm:mainApplication}.
Here the parameter $B$ in the local or non-local
Robin boundary condition may also be an unbounded operator.
We mention that in certain cases similar estimates can also be obtained via
standard techniques involving quadratic forms; see Remark~\ref{formremark}.
Finally, we refer the reader to~\cite{GM08,GM09,G11,G12,G12-1}
for a small selection of other recent contributions on spectral properties
of elliptic differential operators and especially to~\cite{AGW14, G08, LR12, R15, VV06}
and the monographs~\cite{G96, Vo} for elliptic operators on domains
with non-compact boundaries.  For further recent contributions to the literature on asymptotics
of lower bounds and more explicit spectral asymptotics for elliptic
differential operators with Robin boundary conditions we refer the
reader to~\cite{FK,HK,HP15,LP08,PP} and their references.

\subsection*{Acknowledgements}

Jussi Behrndt, Vladimir Lotoreichik and Jonathan Rohleder
acknowledge financial support
by the Austrian Science Fund (FWF): Project P~25162-N26.
V.\,L. also acknowledges financial support
by the Czech Science Foundation: Project 14-06818S.
The authors wish to thank Gerd Grubb for various fruitful discussions and
helpful remarks, and, in particular, for pointing out the article~\cite{A62},
which led to the proof of Proposition~\ref{prop:Weyl_elliptic}\,(iv) in its present form.

% *******************************************************************
\section{Quasi boundary triples, Weyl functions and \\ self-adjoint extensions}\label{sec2}
% *******************************************************************

\noindent
Throughout this section we assume that $S$ is a densely defined, closed,
symmetric operator in a Hilbert space $\cH$.  We start by recalling the notion
of quasi boundary triples, which was introduced in \cite{BL07} as a
generalization of the concepts of ordinary and generalized boundary triples;
for the latter see, e.g.\ \cite{DM91,DM95}.

In the following we denote all appearing inner products by $(\cdot\,,\cdot)$;
the respective Hilbert space will be clear from the context.

\begin{definition}
Let $T\subset S^*$ be a linear operator in $\cH$ such that $\overline T=S^*$.
A triple $\{\cG,\Gamma_0,\Gamma_1\}$ is called a \emph{quasi boundary triple}
for $T\subset S^*$ if\, $\cG$ is a Hilbert space and $\Gamma_0,\Gamma_1:\dom T\rightarrow\cG$
are linear mappings such that
\begin{myenum}
  \item the abstract Green identity
    \begin{equation}\label{green}
      (T f,g) - (f,Tg)
      = (\Gamma_1 f,\Gamma_0 g)
      - (\Gamma_0 f,\Gamma_1 g)
    \end{equation}
    holds for all $f,g\in\dom T$;
  \item the map $\Gamma \defeq (\Gamma_0,\Gamma_1)^\top : \dom T \rightarrow
    \mathcal G \times \mathcal G$ has dense range;
  \item $A_0 \defeq T\upharpoonright \ker\Gamma_0$ is a self-adjoint operator in $\cH$.
\end{myenum}
\end{definition}

\noindent
We recall from \cite{BL07,BL12} that a quasi boundary triple exists if and only
if $S$ admits self-adjoint extensions in $\cH$, that is,
the deficiency indices of $S$ are equal.
Moreover, if $\{\cG,\Gamma_0,\Gamma_1\}$ is a quasi boundary triple for $T\subset S^*$,
then one has $T=S^*$ if and only if $\ran \Gamma = \cG \times \cG$, in which case
$\Gamma= (\Gamma_0,\Gamma_1)^\top : \dom S^* \to \cG \times \cG$
is onto and continuous with respect to the graph norm of $S^*$, the abstract
Green identity holds for all $f,g\in\dom S^*$, and the
restriction $A_0=S^*\upharpoonright \ker\Gamma_0$ is automatically self-adjoint.
In this situation the notion of quasi boundary triples
coincides with the notion of ordinary boundary triples.  In particular, this is
the case when the deficiency indices of $S$ are finite (and equal). For later use let
us also introduce the notation $A_1\defeq T\upharpoonright \ker\Gamma_1$. In contrast to the
case of an ordinary boundary triple, this extension of~$S$ is not necessarily self-adjoint.

With each quasi boundary triple $\{\cG,\Gamma_0,\Gamma_1\}$ one associates
a so-called $\gamma$-field and a Weyl function.
Before we recall their definitions, note that for each $\lambda\in\rho(A_0)$ one
has the direct sum decomposition
\begin{equation*}
  \dom T = \dom A_0\,\dot +\,\ker(T-\lambda)
  = \ker\Gamma_0\,\dot +\,\ker(T-\lambda).
\end{equation*}
Thus the restriction of the boundary map $\Gamma_0$ to $\ker(T-\lambda)$
is injective, and its range coincides with $\ran\Gamma_0$.
The definitions of the $\gamma$-field and the Weyl function are now formally
the same as for ordinary and generalized boundary triples.

\begin{definition}
The \emph{$\gamma$-field} $\gamma$ and the \emph{Weyl function} $M$ corresponding to the
quasi boundary triple $\{\mathcal G,\Gamma_0,\Gamma_1\}$ are defined by
\begin{equation*}
  \lambda \mapsto \gamma(\lambda)\defeq\bigl(\Gamma_0\upharpoonright\ker(T-\lambda)\bigr)^{-1},
  \qquad \lambda \in \rho(A_{0}),
\end{equation*}
and
\begin{equation*}
  \lambda \mapsto M(\lambda) \defeq \Gamma_1 \gamma(\lambda),
  \qquad \lambda \in \rho(A_{0}),
\end{equation*}
respectively.
\end{definition}

Observe that $\gamma(\lambda)$ is a mapping from $\ran\Gamma_0\subset\cG$
onto $\ker(T-\lambda)\subset\cH$ and that the values $M(\lambda)$ of the Weyl
function are operators in $\cG$ mapping $\ran\Gamma_0$ into $\ran\Gamma_1$.
Note that $\ran\Gamma_0$ and $\ran\Gamma_1$ are both dense subspaces of $\cG$;
this is a consequence of the density of the range of $\Gamma=(\Gamma_0,\Gamma_1)^\top$.
Various useful and important properties of the $\gamma$-field and the Weyl function
can be found in \cite[Proposition~2.6]{BL07} or \cite[Propositions~6.13 and 6.14]{BL12}.
For later purposes we recall that the adjoint $\gamma(\lambda)^*$
is a bounded, everywhere defined operator from $\cH$ to $\cG$, which satisfies
\begin{equation}\label{gammastern}
  \gamma(\lambda)^*=\Gamma_1(A_0-\overline\lambda)^{-1},\qquad \lambda\in\rho(A_0).
\end{equation}
Furthermore, the values of the Weyl function have the property
$M(\lambda)\subset M(\overline\lambda)^*$, $\lambda\in\rho(A_0)$, and,
in particular, the operators $M(\lambda)$ are closable.
We point out that the operators $M(\lambda)$ and their closures $\overline{M(\lambda)}$
are in general not bounded.
However, if $M(\lambda_0)$ is bounded for one $\lambda_0\in\rho(A_0)$, then
$M(\lambda)$ is bounded for all $\lambda\in\rho(A_0)$;
see \cite[Proposition~3.3\,(viii)]{BLL13IEOT}.
The next lemma, which contains further properties of the Weyl function, is used later.

\begin{lemma}\label{le:Mnonneg}
Let $\{\cG,\Gamma_0,\Gamma_1\}$ be a quasi boundary triple for $T\subset S^*$
with corresponding Weyl function $M$.
\begin{myenum}
\item
For every $\varphi\in(\ran\Gamma_0)\setminus\{0\}$ the function
\[
  \lambda \mapsto \bigl(M(\lambda)\varphi,\varphi\bigr)
\]
is strictly increasing on each interval in $\rho(A_0)\cap\RR$.
\item
If $A_0$ is bounded from below and
\begin{equation}\label{conv0}
  (M(\lambda)\varphi,\varphi)\to0 \qquad\text{as}\;\;\lambda\to-\infty
\end{equation}
for all $\varphi\in\ran\Gamma_0$,
then
\begin{equation}\label{Mpositive}
  (M(\lambda)\varphi,\varphi)>0, \qquad \varphi\in(\ran\Gamma_0)\setminus\{0\},\;
  \lambda<\min\sigma(A_0).
\end{equation}
\end{myenum}
\end{lemma}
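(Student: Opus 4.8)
The plan is to reduce both statements to the behaviour of the scalar function $\lambda\mapsto(M(\lambda)\varphi,\varphi)$ and to exploit the injectivity of the $\gamma$-field. First I would recall the standard resolvent-type relation for the $\gamma$-field, namely $\gamma(\lambda)=\bigl(I+(\lambda-\mu)(A_0-\lambda)^{-1}\bigr)\gamma(\mu)$ for $\lambda,\mu\in\rho(A_0)$, which follows from the decomposition $\dom T=\ker\Gamma_0\dot+\ker(T-\lambda)$ and the fact that $A_0=T\upharpoonright\ker\Gamma_0$. Differentiating this identity yields $\frac{\drm}{\drm\lambda}\gamma(\lambda)=(A_0-\lambda)^{-1}\gamma(\lambda)$. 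Since $M(\lambda)=\Gamma_1\gamma(\lambda)$, this gives $\frac{\drm}{\drm\lambda}M(\lambda)\varphi=\Gamma_1(A_0-\lambda)^{-1}\gamma(\lambda)\varphi$ for $\varphi\in\ran\Gamma_0$, and combining it with \eqref{gammastern} for real $\lambda$ (so that $\overline\lambda=\lambda$ and $\gamma(\lambda)^*=\Gamma_1(A_0-\lambda)^{-1}$) produces the compact formula $\frac{\drm}{\drm\lambda}M(\lambda)\varphi=\gamma(\lambda)^*\gamma(\lambda)\varphi$ on each interval of $\rho(A_0)\cap\RR$.

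For part (i) I would then compute, for fixed $\varphi\in\ran\Gamma_0$, that $\frac{\drm}{\drm\lambda}(M(\lambda)\varphi,\varphi)=(\gamma(\lambda)^*\gamma(\lambda)\varphi,\varphi)=\|\gamma(\lambda)\varphi\|^2$. The crucial observation is that $\gamma(\lambda)=\bigl(\Gamma_0\upharpoonright\ker(T-\lambda)\bigr)^{-1}$ is injective by its very definition, so for $\varphi\neq0$ one has $\gamma(\lambda)\varphi\neq0$ and hence the derivative is strictly positive throughout each interval in $\rho(A_0)\cap\RR$. Integrating the continuous, strictly positive integrand gives $(M(\lambda)\varphi,\varphi)-(M(\mu)\varphi,\varphi)=\int_\mu^\lambda\|\gamma(t)\varphi\|^2\,\drm t>0$ whenever $\mu<\lambda$ lie in the same interval, which is exactly the asserted strict monotonicity.

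Part (ii) should then follow from (i) by a soft monotonicity argument. If $A_0$ is bounded from below, then $(-\infty,\min\sigma(A_0))$ is an interval contained in $\rho(A_0)\cap\RR$, so by part (i) the function $g(\lambda)\defeq(M(\lambda)\varphi,\varphi)$ is strictly increasing there for each fixed $\varphi\in(\ran\Gamma_0)\setminus\{0\}$. Because $g$ is increasing, its infimum over this interval equals its limit as $\lambda\to-\infty$, which by assumption \eqref{conv0} is $0$; hence $g(\lambda)>\inf g=0$ for every $\lambda<\min\sigma(A_0)$, which is precisely \eqref{Mpositive}.

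The only genuine obstacle is the rigorous justification of the differentiation and integration, that is, that $\lambda\mapsto M(\lambda)\varphi$ is analytic on $\rho(A_0)$ with the derivative identified above. This is, however, part of the standard properties of the $\gamma$-field and the Weyl function collected in \cite[Proposition~2.6]{BL07} and \cite[Propositions~6.13 and 6.14]{BL12}, so once the derivative formula $M'(\lambda)=\gamma(\lambda)^*\gamma(\lambda)$ is in place, both assertions are immediate consequences of the injectivity of $\gamma(\lambda)$ and elementary monotonicity.
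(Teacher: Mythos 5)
Your proposal is correct and follows essentially the same route as the paper: the paper likewise deduces (i) from the identity $\frac{\drm}{\drm\lambda}\bigl(M(\lambda)\varphi,\varphi\bigr)=\|\gamma(\lambda)\varphi\|^2>0$ (citing it from \cite[Proposition~3.3\,(vii)]{BLL13IEOT} rather than rederiving it from the resolvent identity for the $\gamma$-field, as you sketch) together with the injectivity of $\gamma(\lambda)$, and obtains (ii) directly from \eqref{conv0} and the monotonicity in (i), exactly as in your final paragraph on part (ii).
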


\begin{proof}
(i)
For $\varphi\in(\ran\Gamma_0)\setminus\{0\}$ and $\lambda\in\rho(A_0)\cap\RR$
we obtain from \cite[Proposition~3.3\,(vii)]{BLL13IEOT} that
\[
  \frac{\drm}{\drm\lambda}\bigl(M(\lambda)\varphi,\varphi\bigr)
  = \bigl(\gamma(\lambda)^*\gamma(\lambda)\varphi,\varphi\bigr)
  = \|\gamma(\lambda)\varphi\|^2 > 0
\]
where the last inequality is true since $\gamma(\lambda)$ is injective.

(ii)
Relation \eqref{Mpositive} follows directly from \eqref{conv0} and (i).
\end{proof}

In contrast to ordinary boundary triples there is no one-to-one
correspondence between self-adjoint relations $\Theta$ or $B$ in $\cG$ and self-adjoint
extensions of $S$ in $\cH$ of the
form $A_\Theta=S^*\upharpoonright\ker(\Gamma_1-\Theta\Gamma_0)$
or $A_{[B]}=S^*\upharpoonright\ker(B\Gamma_1-\Gamma_0)$, respectively.
However, various sufficient conditions for self-adjointness
in terms of the parameters $\Theta$ or $B$
were obtained in,
e.g.\ \cite[Theorems~6.20 and 6.21]{BL12} and \cite[Theorem~3.11]{BLL13IEOT},
and, in connection with PDEs on domains with compact boundaries,
also in \cite[Proposition~4.3 and Theorem~4.8]{BL07}
and \cite[Theorem~4.5]{BLL13IEOT}.
In the next theorem we provide a new very useful
sufficient condition, which is formulated for the parameter $B=\Theta^{-1}$.
In contrast to earlier results no compactness assumption on the values of the
Weyl function is imposed.  In particular, this allows us to apply the
abstract results to elliptic PDEs on domains with non-compact boundaries;
see Section~\ref{sec:PDE}. We remark that in the application the
conditions on~$B$ simplify substantially.

% -------------------------------------------------------------------
\begin{theorem}\label{mainthm}
Let $\{\cG,\Gamma_0,\Gamma_1\}$ be a quasi boundary triple for $T\subset S^*$
with corresponding $\gamma$-field $\gamma$ and Weyl function $M$.
Let $B$ be a linear operator in $\cG$ and assume that there
exist $\lambda_\pm\in\dC^\pm$ such that the following conditions are satisfied:
\begin{myenum}
 \item $B$ is symmetric;
 \item $1\in\rho(B\overline{M(\lambda_\pm)})$;
 \item $B\bigl(\ran\ov{M(\lambda_\pm)}\cap\dom B\bigr)\subset\ran\Gamma_0$;
 \item $\ran\Gamma_1\subset\dom B$;
 \item $B(\ran\Gamma_1)\subset\ran\Gamma_0$ \;\; or \;\; $A_1$ is self-adjoint.
\end{myenum}
Then the operator
\begin{equation}\label{ab}
  A_{[B]}f=Tf, \qquad
  \dom A_{[B]}=\bigl\{f\in\dom T:\Gamma_0 f= B\Gamma_1 f\bigr\},
\end{equation}
is a self-adjoint extension of $S$, and
\begin{equation}\label{resformula}
  (A_{[B]}-\lambda)^{-1}=(A_0-\lambda)^{-1}
  +\gamma(\lambda)\bigl(I-BM(\lambda)\bigr)^{-1}B\gamma(\overline\lambda)^*
\end{equation}
holds for all $\lambda\in\rho(A_{[B]})\cap\rho(A_0)$.
\end{theorem}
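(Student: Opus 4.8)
The plan is to prove self-adjointness through the standard criterion that a symmetric operator whose range equals $\cH$ at one point of the upper and one point of the lower half-plane is self-adjoint, and to read off \eqref{resformula} from the very same surjectivity construction. Throughout I use that $\lambda_\pm\in\dC^\pm\subset\dC\setminus\RR\subset\rho(A_0)$ since $A_0$ is self-adjoint.

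First I would check that $A_{[B]}$ is symmetric. It is densely defined because $S\subset A_{[B]}$: every $f\in\dom S$ satisfies $\Gamma_0 f=\Gamma_1 f=0$, hence $\Gamma_0 f=B\Gamma_1 f$. For $f,g\in\dom A_{[B]}$ the boundary condition gives $\Gamma_0 f=B\Gamma_1 f$ and $\Gamma_0 g=B\Gamma_1 g$, and by (iv) both $\Gamma_1 f,\Gamma_1 g$ lie in $\dom B$. Substituting into the abstract Green identity \eqref{green} and using the symmetry of $B$ from (i), the right-hand side becomes $(\Gamma_1 f,B\Gamma_1 g)-(B\Gamma_1 f,\Gamma_1 g)=0$, so $(Tf,g)=(f,Tg)$ and $A_{[B]}$ is symmetric.

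The core of the argument is to show $\ran(A_{[B]}-\lambda_\pm)=\cH$. Fix $\lambda\in\{\lambda_+,\lambda_-\}$ and $h\in\cH$. Guided by the decomposition $\dom T=\dom A_0\,\dot+\,\ker(T-\lambda)$, I would make the ansatz $f=(A_0-\lambda)^{-1}h+\gamma(\lambda)\phi$ with $\phi:=(I-B\overline{M(\lambda)})^{-1}B\gamma(\overline\lambda)^*h$, which is well defined and bounded in $h$ by (ii). Here $\gamma(\overline\lambda)^*h=\Gamma_1(A_0-\lambda)^{-1}h\in\ran\Gamma_1\subset\dom B$ by \eqref{gammastern} and (iv), so the right-hand side makes sense. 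The key point is that $\phi\in\ran\Gamma_0$, which is the domain of $\gamma(\lambda)$: from $\phi=B\gamma(\overline\lambda)^*h+B\overline{M(\lambda)}\phi$, the first summand lies in $\ran\Gamma_0$ by the first alternative of (v), while $\overline{M(\lambda)}\phi\in\ran\overline{M(\lambda)}\cap\dom B$ forces the second summand into $\ran\Gamma_0$ by (iii). Hence $\gamma(\lambda)\phi$ is defined, $f\in\dom T$, $\Gamma_0 f=\phi$, and $\Gamma_1 f=\gamma(\overline\lambda)^*h+M(\lambda)\phi$; since $\phi\in\ran\Gamma_0$ one has $M(\lambda)\phi=\overline{M(\lambda)}\phi$, so the defining equation for $\phi$ gives $B\Gamma_1 f=\phi=\Gamma_0 f$. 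Therefore $f\in\dom A_{[B]}$ and $(A_{[B]}-\lambda)f=(T-\lambda)f=h$, which proves surjectivity and simultaneously exhibits $(A_{[B]}-\lambda)^{-1}h$ as the right-hand side of \eqref{resformula} at $\lambda_\pm$ (with the inverse understood via the closure $\overline{M(\lambda)}$).

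Since $A_{[B]}$ is symmetric with $\ran(A_{[B]}-\lambda_+)=\ran(A_{[B]}-\lambda_-)=\cH$ for $\lambda_+\in\dC^+$ and $\lambda_-\in\dC^-$, both deficiency indices vanish and $A_{[B]}$ is self-adjoint. Finally, for arbitrary $\lambda\in\rho(A_{[B]})\cap\rho(A_0)$ I would run the computation in reverse: applying $\Gamma_0$ and $\Gamma_1$ to $f=(A_{[B]}-\lambda)^{-1}h$ and using the same decomposition shows that $\phi:=\Gamma_0 f$ solves $(I-B\overline{M(\lambda)})\phi=B\gamma(\overline\lambda)^*h$, and identifying this solution via $1\in\rho(B\overline{M(\lambda)})$ reproduces \eqref{resformula}. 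I expect the main obstacle to be exactly this domain-and-range bookkeeping—securing $\phi\in\ran\Gamma_0$ and reconciling $M(\lambda)$ with the closure $\overline{M(\lambda)}$ that appears in (ii)—together with propagating the invertibility $1\in\rho(B\overline{M(\lambda)})$ from $\lambda_\pm$ to all of $\rho(A_{[B]})\cap\rho(A_0)$; the alternative hypothesis in (v), that $A_1$ be self-adjoint, should be treated by a parallel argument using $\lambda_\pm\in\rho(A_1)$ in place of the range condition $B(\ran\Gamma_1)\subset\ran\Gamma_0$.
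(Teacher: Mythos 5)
Your symmetry argument and your surjectivity construction at $\lambda_\pm$ are exactly the paper's Steps 1--3: the same ansatz $f=(A_0-\lambda)^{-1}h+\gamma(\lambda)\phi$ with $\phi=(I-B\overline{M(\lambda)})^{-1}B\gamma(\overline\lambda)^*h$, and the same bookkeeping placing $\phi$ in $\ran\Gamma_0$ via (iii) and the first alternative of (v). But the last step, the resolvent formula \eqref{resformula} at an arbitrary $\lambda\in\rho(A_{[B]})\cap\rho(A_0)$, contains a genuine gap: you propose to identify $\Gamma_0(A_{[B]}-\lambda)^{-1}h$ as \emph{the} solution ``via $1\in\rho(B\overline{M(\lambda)})$'' and you flag the propagation of this condition from $\lambda_\pm$ to general $\lambda$ as an open obstacle. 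As stated this step fails: hypothesis (ii) is assumed only at the two points $\lambda_\pm$, and there is no reason for the spectral condition to hold elsewhere in $\rho(A_{[B]})\cap\rho(A_0)$. The paper never propagates it. Instead it proves directly that the (non-closed) operator $I-BM(\lambda)$ is injective whenever $\lambda\in\rho(A_{[B]})$: if $\varphi\in\ker(I-BM(\lambda))$, then $\varphi\in\ran\Gamma_0$ and $f\defeq\gamma(\lambda)\varphi$ satisfies $B\Gamma_1 f=BM(\lambda)\varphi=\varphi=\Gamma_0 f$, so $f\in\ker(A_{[B]}-\lambda)=\{0\}$ and hence $\varphi=0$. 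Combined with your own reverse computation --- which, carried out with $k=(A_{[B]}-\lambda)^{-1}h$ and $g=k-(A_0-\lambda)^{-1}h\in\ker(T-\lambda)$, shows $(I-BM(\lambda))\Gamma_0 k=B\gamma(\overline\lambda)^*h$, i.e.\ $\ran(B\gamma(\overline\lambda)^*)\subset\ran(I-BM(\lambda))$ --- this injectivity makes $(I-BM(\lambda))^{-1}B\gamma(\overline\lambda)^*$ well defined on all of $\cH$, and the Step-3 computation then yields \eqref{resformula} with no invertibility assumption at $\lambda$ whatsoever. Note also that \eqref{resformula} is stated with $M(\lambda)$, not $\overline{M(\lambda)}$, precisely because of this mechanism.

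The second alternative in (v) is also left open. Your suggestion of ``a parallel argument using $\lambda_\pm\in\rho(A_1)$'' is not what is needed and is not fleshed out; in your construction of $\phi$ the only way $B\gamma(\overline\lambda_\pm)^*h$ enters $\ran\Gamma_0$ is through the range condition, which you no longer have. The paper instead \emph{reduces} the second alternative to the first: self-adjointness of $A_1$ yields $\ran M(\lambda_\pm)=\ran\Gamma_1$ (via the decomposition $\dom T=\dom A_1\,\dot+\,\ker(T-\lambda_\pm)$; see \cite[Proposition~2.6\,(iii)]{BL07}), so every $g\in\ran\Gamma_1$ lies in $\ran\overline{M(\lambda_\pm)}\cap\dom B$ by (iv), and (iii) then gives $Bg\in\ran\Gamma_0$; thus $B(\ran\Gamma_1)\subset\ran\Gamma_0$ holds and Steps 1--4 apply verbatim. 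Everything else in your proposal, including the observation that $M(\lambda)\phi=\overline{M(\lambda)}\phi$ once $\phi\in\ran\Gamma_0$ and the deficiency-index conclusion from surjectivity at one point in each half-plane, is correct and matches the paper.
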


Note that if $\{\cG,\Gamma_0,\Gamma_1\}$ is a generalized boundary triple,
i.e.\ if $\ran\Gamma_0=\cG$, then (iii) and (v) are automatically satisfied.

Before we prove Theorem~\ref{mainthm}, we state a corollary for bounded $B$,
which follows immediately from Theorem~\ref{mainthm}.

\begin{corollary}\label{maincor}
Let $\{\cG,\Gamma_0,\Gamma_1\}$ be a quasi boundary triple for $T\subset S^*$
with corresponding $\gamma$-field $\gamma$ and Weyl function $M$.
Let $B$ be a bounded self-adjoint operator in $\cG$ and assume that there
exist $\lambda_\pm\in\dC^\pm$ such that the following conditions are satisfied:
\begin{myenum}
\item
$1\in\rho(B\overline{M(\lambda_\pm)})$;
\item
$B(\ran\ov{M(\lambda_\pm)})\subset\ran\Gamma_0$;
\item
$B(\ran\Gamma_1)\subset\ran\Gamma_0$ \;\; or \;\; $A_1$ is self-adjoint.
\end{myenum}
Then the operator $A_{[B]}$ in \eqref{ab} is a self-adjoint extension of $S$,
and the resolvent formula \eqref{resformula} holds for
all $\lambda\in\rho(A_{[B]})\cap\rho(A_0)$.
\end{corollary}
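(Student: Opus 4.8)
The plan is to deduce the corollary directly from Theorem~\ref{mainthm} by showing that the hypotheses stated here imply all five conditions (i)--(v) of that theorem; once this is established, the self-adjointness of $A_{[B]}$ and the resolvent formula \eqref{resformula} follow immediately.

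The crucial observation is that a bounded self-adjoint operator $B$ in $\cG$ is everywhere defined, so that $\dom B=\cG$, and is in particular symmetric. This at once yields conditions (i) and (iv) of Theorem~\ref{mainthm}: the operator $B$ is symmetric, and trivially $\ran\Gamma_1\subset\cG=\dom B$.

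It then remains to match the three remaining conditions. Condition (ii) of Theorem~\ref{mainthm}, namely $1\in\rho\bigl(B\,\ov{M(\lambda_\pm)}\bigr)$, is exactly condition (i) of the corollary. For condition (iii) of the theorem, the identity $\dom B=\cG$ gives $\ran\ov{M(\lambda_\pm)}\cap\dom B=\ran\ov{M(\lambda_\pm)}$, so that condition (iii) reduces to $B\bigl(\ran\ov{M(\lambda_\pm)}\bigr)\subset\ran\Gamma_0$, which is precisely condition (ii) of the corollary. Finally, condition (v) of the theorem coincides verbatim with condition (iii) of the corollary.

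Having verified conditions (i)--(v), I would invoke Theorem~\ref{mainthm} to conclude. Since $B$ is everywhere defined and symmetric, no genuine difficulty arises in this argument; the single point at which the boundedness of $B$ is actually used is the reduction of the theorem's condition (iii) to the corollary's condition (ii) via the equality $\ran\ov{M(\lambda_\pm)}\cap\dom B=\ran\ov{M(\lambda_\pm)}$.
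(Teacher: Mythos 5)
Your proposal is correct and matches the paper exactly: the paper states that Corollary~\ref{maincor} follows immediately from Theorem~\ref{mainthm}, and your verification — that a bounded self-adjoint $B$ is everywhere defined, hence symmetric with $\ran\Gamma_1\subset\dom B=\cG$ and $\ran\ov{M(\lambda_\pm)}\cap\dom B=\ran\ov{M(\lambda_\pm)}$, so that conditions (i)--(v) of the theorem reduce to the corollary's hypotheses — is precisely the routine check the paper leaves implicit. Nothing further is needed.
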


\begin{proof}[Proof of Theorem~\ref{mainthm}]
The proof of Theorem~\ref{mainthm} consists of several steps.
In the first four steps we assume that the first condition in (v) is satisfied.

\medskip
\noindent
\textit{Step 1.}
First we show that $A_{[B]}$ is symmetric, which is essentially a simple
consequence of the abstract Green identity \eqref{green}.  In fact, by assumption (iv)
for $f,g\in\dom A_{[B]}$ we have $\Gamma_1f,\Gamma_1g\in\dom B$,
\begin{equation*}
  B\Gamma_1 f=\Gamma_0 f,\quad\text{and}\quad B\Gamma_1 g=\Gamma_0 g,
\end{equation*}
which implies that
\begin{align*}
  (A_{[B]}f,g)-(f,A_{[B]}g)&=(Tf,g)-(f,Tg) =(\Gamma_1 f,\Gamma_0 g)-(\Gamma_0 f,\Gamma_1 g) \\
  &=(\Gamma_1 f,B\Gamma_1 g)-(B\Gamma_1 f,\Gamma_1 g) = 0,
\end{align*}
where assumption (i) on the symmetry of the operator $B$ was used
in the last step.  This shows that $A_{[B]}$ is a symmetric operator in $\cH$.

\medskip
\noindent
\textit{Step 2.}
In this step we show the inclusions
\begin{equation}\label{inclusion}
  \ran\bigl(B\gamma(\overline\lambda_\pm)^*\bigr) \subset \ran\bigl( I-BM(\lambda_\pm)\bigr).
\end{equation}
We consider only $\lambda_+\in\dC^+$; the proof for $\lambda_-\in\dC^-$ is the same.
Note first that it follows from \eqref{gammastern} and condition (iv) that the
product $B\gamma(\overline\lambda_\pm)^*$ is everywhere defined.
Let $g\in\ran(B\gamma(\ov\lambda_+)^*)$. Then there exists an $f\in\cH$
such that $g=B\gamma(\ov\lambda_+)^*f$.
By \eqref{gammastern} we have
$\gamma(\ov\lambda_+)^* f=\Gamma_1(A_0-\lambda_+)^{-1}f\in\ran\Gamma_1$,
and hence assumption (v) implies that
\begin{equation}\label{gutgut}
  B\gamma(\overline\lambda_+)^* f\in\ran\Gamma_0.
\end{equation}
We set
\begin{equation}\label{varphi}
 \varphi\defeq\bigl(I-B\overline{M(\lambda_+)}\,\bigr)^{-1}B\gamma(\overline\lambda_+)^* f,
\end{equation}
which is well defined by assumption (ii).
We can rewrite \eqref{varphi} in the form
\begin{equation}\label{varphi2}
  \varphi=B\overline{M(\lambda_+)}\varphi + B\gamma(\overline\lambda_+)^* f.
\end{equation}
Since $\ov{M(\lambda_+)}\varphi \in \ran \ov{M(\lambda_+)}\cap\dom B$, assumption (iii)
and the relations \eqref{gutgut} and \eqref{varphi2} imply
that $\varphi\in\ran\Gamma_0=\dom M(\lambda_+)$.
Together with \eqref{varphi2} this yields
\begin{equation*}
 \bigl(I-BM(\lambda_+)\bigr)\varphi=B\gamma(\overline\lambda_+)^* f = g,
\end{equation*}
and hence $g\in\ran(I-BM(\lambda_+))$, i.e.\
the inclusion \eqref{inclusion} is shown for $\lambda_+\in\dC^+$.

\medskip
\noindent
\textit{Step 3.}
We claim that $\ran (A_{[B]}-\lambda_\pm)=\cH$ holds.
Again we show the assertion only for $\lambda_+\in\dC^+$;
the arguments for $\lambda_-\in\dC^-$ are the same.
Let $f\in\cH$ and consider the element
\begin{equation}\label{h}
  h \defeq (A_0-\lambda_+)^{-1}f+\gamma(\lambda_+)\bigl(I-BM(\lambda_+)\bigr)^{-1}
  B\gamma(\overline \lambda_+)^* f.
\end{equation}
Note that by assumption (ii) the inverse $(I-BM(\lambda_+))^{-1}$ exists.
It maps into $\dom M(\lambda_+)=\ran\Gamma_0$, so the product
with $\gamma(\lambda_+)$ is well defined. Observe also that the product
of $(I-BM(\lambda_+))^{-1}$ and $B\gamma(\overline \lambda_+)^*$ is well defined
by \eqref{inclusion}.
We now show that $h\in\dom A_{[B]}$.  Clearly, $h\in\dom T$ since
\begin{equation*}
  (A_0-\lambda_+)^{-1}f\in\dom A_0\subset\dom T
\end{equation*}
 and
\begin{equation*}
  \ran\gamma(\lambda_+)=\ker(T-\lambda_+)\subset\dom T.
\end{equation*}
Furthermore, using \eqref{gammastern} and the definition of $M(\lambda_+)$ we have
\begin{align*}
  B\Gamma_1 h
  &= B\Gamma_1 (A_0-\lambda_+)^{-1}f
    +B\Gamma_1 \gamma(\lambda_+)\bigl(I-BM(\lambda_+)\bigr)^{-1}B\gamma(\ov\lambda_+)^* f \\
  &= B\gamma(\overline \lambda_+)^* f
    + BM(\lambda_+)\bigl(I-BM(\lambda_+)\bigr)^{-1}B\gamma(\ov\lambda_+)^* f \\
  & = \big[(I - B M (\lambda_+)) + B M (\lambda_+) \big] \bigl(I-BM(\lambda_+)\bigr)^{-1}B\gamma(\ov\lambda_+)^* f \\
  &= \bigl(I-BM(\lambda_+)\bigr)^{-1}B\gamma(\ov\lambda_+)^* f;
\end{align*}
the relation $\dom A_0=\ker\Gamma_0$ and the definition of $\gamma(\lambda_+)$ yield
\begin{align*}
  \Gamma_0 h
  &= \Gamma_0 (A_0-\lambda_+)^{-1}f
    +\Gamma_0 \gamma(\lambda_+)\bigl(I-BM(\lambda_+)\bigr)^{-1}B\gamma(\ov\lambda_+)^* f \\
  &=\bigl(I-BM(\lambda_+)\bigr)^{-1}B\gamma(\ov\lambda_+)^*f.
\end{align*}
Hence the element $h$ in \eqref{h} satisfies the boundary
condition $\Gamma_0h=B\Gamma_1 h$.  This shows that $h\in\dom A_{[B]}$.
Finally, we obtain from \eqref{h} that
\begin{equation}\label{res}
 (A_{[B]}-\lambda_+)h=(T-\lambda_+)h=(T-\lambda_+)(A_0-\lambda_+)^{-1}f=f,
\end{equation}
where again $\ran\gamma(\lambda_+)=\ker(T-\lambda_+)$ was used.
Hence $\ran (A_{[B]}-\lambda_+)=\cH$ holds.

\medskip
\noindent
\textit{Step 4.}
It follows from the symmetry of $A_{[B]}$ shown in Step~1 and the range condition
in Step~3 that the operator $A_{[B]}$ is self-adjoint in $\cH$.
The resolvent formula follows for $\lambda=\lambda_\pm$ immediately from the
identities~\eqref{h} and~\eqref{res} in Step~3.
Assume now that $\lambda\in\rho(A_{[B]})\cap\rho(A_0)$ is arbitrary.
We claim that the operator $I-BM(\lambda)$ is injective.
Indeed, if $\varphi\in\ker(I-BM(\lambda))$ then $\varphi\in\dom M(\lambda)=\ran\Gamma_0$
and hence $f\defeq\gamma(\lambda)\varphi\in\ker(T-\lambda)$, so that
$\Gamma_0 f=\varphi$. From
\begin{equation*}
  B\Gamma_1 f = BM(\lambda)\Gamma_0 f
  = B M(\lambda)\varphi = \varphi = \Gamma_0 f
\end{equation*}
we conclude that $f\in\dom A_{[B]}$ and hence $f\in\ker(A_{[B]}-\lambda)$.
Since $\lambda\in\rho(A_{[B]})$, we obtain $f=0$ and
$\varphi=\Gamma_0 f=0$.  Thus $I-BM(\lambda)$ is injective.

Next we show the inclusion
\begin{equation}\label{inclusion2}
  \ran\bigl(B\gamma(\overline\lambda)^*\bigr) \subset \ran\bigl(I-BM(\lambda)\bigr).
\end{equation}
To this end, let $\psi\in\ran(B\gamma(\ov\lambda)^*)$.  Then there exists an $f\in\cH$
such that $\psi=B\gamma(\ov\lambda)^* f$.  Set
\begin{align*}
  g \defeq& (A_{[B]}-\lambda)^{-1}f-(A_0-\lambda)^{-1}f \;\in\ker (T-\lambda), \\
  k \defeq& (A_{[B]}-\lambda)^{-1}f \;\in\dom A_{[B]}.
\end{align*}
From
\begin{align*}
  \Gamma_0 g &= \Gamma_0 k,\\
  \Gamma_1 g &= \Gamma_1 k-\Gamma_1(A_0-\lambda)^{-1}f
  = \Gamma_1 k-\gamma(\overline\lambda)^*f
\end{align*}
we conclude that
\begin{equation*}
  \bigl(I-BM(\lambda)\bigr)\Gamma_0 k
  = \Gamma_0 k- BM(\lambda)\Gamma_0 g
  = B \Gamma_1 k - B \Gamma_1 g
  = B\gamma(\overline\lambda)^*f = \psi.
\end{equation*}
This shows the inclusion in \eqref{inclusion2}. Now it follows in exactly
the same way as in Step~3 that for $\lambda\in\rho(A_{[B]})\cap\rho(A_0)$
the resolvent $(A_{[B]}-\lambda)^{-1}$ is given by the right-hand side
of \eqref{resformula}.

\medskip
\noindent
\textit{Step 5.}
Finally, assume that the second condition in (v) is satisfied,
i.e.\ that $A_1$ is self-adjoint.
Then $\ran M(\lambda_\pm)=\ran\Gamma_1$ by \cite[Proposition~2.6\,(iii)]{BL07}.
Hence, if $g\in\ran\Gamma_1$ then $g\in\dom B$ by (iv)
and $g \in \ran M (\lambda_\pm) \subset \ran (\ov{M(\lambda_\pm)})$.
Now (iii) implies that $Bg\in\ran\Gamma_0$.
This shows that the first condition in (v) is satisfied, and we can apply
Steps~1--4 of the proof.
\end{proof}

For the case when the spectrum of the self-adjoint operator $A_0$ does
not cover the whole real line a useful variant of Theorem~\ref{mainthm}
is formulated below.  Its proof is almost the same as the proof of Theorem~\ref{mainthm};
here the range condition in Step~3 of the proof needs only to be verified
for some real point in $\rho(A_0)$, which then automatically belongs to $\rho(A_{[B]})$.

\begin{theorem}\label{mainthm2}
Let $\{\cG,\Gamma_0,\Gamma_1\}$ be a quasi boundary triple for $T\subset S^*$
with corresponding $\gamma$-field $\gamma$ and Weyl function $M$.
Let $B$ be a linear operator in $\cG$ and assume that there
exists a $\lambda_0\in\rho(A_0)\cap\dR$ such that the following conditions are satisfied:
\begin{myenum}
 \item $B$ is symmetric;
 \item $1\in\rho(B\overline{M(\lambda_0)})$;
 \item $B\bigl(\ran\ov{M(\lambda_0)}\cap\dom B\bigr)\subset\ran\Gamma_0$;
 \item $\ran\Gamma_1\subset\dom B$;
 \item $B(\ran\Gamma_1)\subset\ran\Gamma_0$ \;\; or \;\; $\lambda_0\in\rho(A_1)$.
\end{myenum}
Then the operator
\begin{equation}\label{ab2}
  A_{[B]}f=Tf, \qquad
  \dom A_{[B]}=\bigl\{f\in\dom T:\Gamma_0 f= B\Gamma_1 f\bigr\},
\end{equation}
is a self-adjoint extension of $S$ such that $\lambda_0\in\rho(A_{[B]})$, and
\begin{equation}\label{resformula2}
  (A_{[B]}-\lambda)^{-1}=(A_0-\lambda)^{-1}
  + \gamma(\lambda)\bigl(I-BM(\lambda)\bigr)^{-1}B\gamma(\overline\lambda)^*
\end{equation}
holds for all $\lambda\in\rho(A_{[B]})\cap\rho(A_0)$.
\end{theorem}

For completeness the corresponding version of Corollary~\ref{maincor} is also stated.

\begin{corollary}\label{maincor2}
Let $\{\cG,\Gamma_0,\Gamma_1\}$ be a quasi boundary triple for $T\subset S^*$
with corresponding $\gamma$-field $\gamma$ and Weyl function $M$.
Let $B$ be a bounded self-adjoint operator in $\cG$ and assume that there
exists a $\lambda_0\in\rho(A_0)\cap\dR$ such that the following conditions are satisfied:
\begin{myenum}
 \item
 $1\in\rho(B\overline{M(\lambda_0)})$;
 \item
 $B(\ran\ov{M(\lambda_0)})\subset\ran\Gamma_0$;
 \item
 $B(\ran\Gamma_1)\subset\ran\Gamma_0$ \;\; or \;\; $\lambda_0\in\rho(A_1)$.
\end{myenum}
Then the operator $A_{[B]}$ in \eqref{ab2} is a self-adjoint extension of $S$
such that $\lambda_0\in\rho(A_{[B]})$,
and the resolvent formula \eqref{resformula2} holds for
all $\lambda\in\rho(A_{[B]})\cap\rho(A_0)$.
\end{corollary}

In the following theorem we consider the situation that the values of the
Weyl function are bounded operators which
tend to zero as $\lambda \to -\infty$.
In order to formulate this theorem, let us introduce the following notation.
For a self-adjoint operator $B$ with spectral measure $E_B(\cdot)$
we define its positive and negative parts by
\begin{align}\label{eq:negPos}
  B_\pm \defeq \pm\int_{\dR_\pm} \lambda\:\drm E_B (\lambda),
\end{align}
respectively, so that $B_\pm \geq 0$ and $B = B_+ - B_-$.
If $A_0$ is bounded from below, the assumption $M(\lambda)\to0$ as $\lambda\to-\infty$
implies that $(M (\lambda) \phi, \phi) \to 0$ as $\lambda \to - \infty$ for each $\phi \in \ran \Gamma_0$. Recall from Lemma~\ref{le:Mnonneg} that  this implies non-negativity of $M(\lambda)$ for $\lambda<\min\sigma(A_0)$; in particular, under these conditions $\ov{M(\lambda)}^{1/2}$ is well defined
for such $\lambda$.

\begin{theorem}\label{thm:lowerBound}
Let $\{\cG,\Gamma_0,\Gamma_1\}$ be a quasi boundary triple for $T\subset S^*$
with corresponding $\gamma$-field $\gamma$ and Weyl function $M$.
Assume that $A_0$ is bounded from below
and that $M(\lambda)$ is bounded for one (and hence for all) $\lambda\in\rho(A_0)$.
Let $B$ be a self-adjoint operator in $\cG$ which is bounded from above
and assume that the following conditions are satisfied:
\begin{myenum}
\item
  $\bigl\|\ov{M(\lambda)}\bigr\|\to0$ \; as \; $\lambda\to-\infty$;
\item
  $\ran\ov{M(\lambda)}^{1/2}\subset\dom B$ \;\; for all\; $\lambda<\min\sigma(A_0)$;
\item
  $B\bigl(\ran\ov{M(\lambda)}\bigr)\subset\ran\Gamma_0$ \;\; for all\; $\lambda<\min\sigma(A_0)$;
\item
  $\ran\Gamma_1\subset\dom B$;
\item
  $B(\ran\Gamma_1)\subset\ran\Gamma_0$ \quad or \quad $A_1$ is self-adjoint and bounded from below.
\end{myenum}
Then the operator
\begin{equation*}
  A_{[B]}f=Tf, \qquad
  \dom A_{[B]}=\bigl\{f\in\dom T:\Gamma_0 f= B\Gamma_1 f\bigr\},
\end{equation*}
is a self-adjoint extension of $S$, which is bounded from below,
and the resolvent formula \eqref{resformula2} holds for
all $\lambda\in\rho(A_{[B]})\cap\rho(A_0)$.

Moreover, the following statements are true.

\begin{myenuma}
\item
If $B\le0$, then $\min\sigma(A_{[B]})\ge\min\sigma(A_0)$.
\item
If there exist $\alpha\in(0,1]$, $\mu\le\min\sigma(A_0)$ and $C>0$ such that
\begin{equation}\label{decay}
  \bigl\|\overline{M(\lambda)}\bigr\| \le \frac{C}{\bigl(\mu-\lambda \bigr)^\alpha},
  \qquad \text{for}\;\;\lambda < \mu,
\end{equation}
then the lower bound of $A_{[B]}$ satisfies
\begin{equation}\label{Bbound}
  \min\sigma(A_{[B]}) \ge \mu - (C\|B_+\|)^{1/\alpha}.
\end{equation}
\end{myenuma}
\end{theorem}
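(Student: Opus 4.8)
The plan is to apply Theorem~\ref{mainthm2} with a suitable real point $\lambda_0<\min\sigma(A_0)$, then to determine for which such $\lambda_0$ the hypotheses force $\lambda_0\in\rho(A_{[B]})$, and finally to read off the lower bounds in (a) and (b) from the range of admissible $\lambda_0$. Throughout, fix $\lambda<\min\sigma(A_0)$, so that $\lambda\in\rho(A_0)\cap\dR$. By the remark preceding the theorem (which rests on Lemma~\ref{le:Mnonneg}) the operator $\ov{M(\lambda)}$ is bounded, self-adjoint and non-negative, and hence $\ov{M(\lambda)}^{1/2}$ is a bounded non-negative self-adjoint operator.

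The first key step is a boundedness observation that tames the possibly unbounded parameter~$B$. By condition~(ii) we have $\ran\ov{M(\lambda)}^{1/2}\subset\dom B$, so $B\ov{M(\lambda)}^{1/2}$ is everywhere defined; since $B$ is closed, $\ov{M(\lambda)}^{1/2}$ is bounded and its range lies in $\dom B$, this product is closed, and therefore bounded by the closed graph theorem. Consequently $B\ov{M(\lambda)}=\bigl(B\ov{M(\lambda)}^{1/2}\bigr)\ov{M(\lambda)}^{1/2}$ is bounded as well. Moreover $\ran\ov{M(\lambda)}\subset\ran\ov{M(\lambda)}^{1/2}\subset\dom B$, so conditions (i) and (iii)--(v) of the present theorem are exactly conditions (i) and (iii)--(v) of Theorem~\ref{mainthm2}; the only remaining point is condition~(ii) there, namely $1\in\rho\bigl(B\ov{M(\lambda)}\bigr)$.

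To verify this I would pass to the symmetrized operator $N(\lambda)\defeq\ov{M(\lambda)}^{1/2}B\ov{M(\lambda)}^{1/2}$. Writing $X=B\ov{M(\lambda)}^{1/2}$ and $Y=\ov{M(\lambda)}^{1/2}$, both bounded, one has $B\ov{M(\lambda)}=XY$ and $N(\lambda)=YX$, so the nonzero spectra of $B\ov{M(\lambda)}$ and $N(\lambda)$ coincide; in particular $1\in\rho\bigl(B\ov{M(\lambda)}\bigr)$ if and only if $1\in\rho(N(\lambda))$. Now $N(\lambda)$ is bounded and self-adjoint, and for every $\phi\in\cG$ we have $(N(\lambda)\phi,\phi)=(Bu,u)$ with $u=\ov{M(\lambda)}^{1/2}\phi\in\dom B$; since $B=B_+-B_-\le B_+\le\|B_+\|$ this gives $N(\lambda)\le\|B_+\|\,\ov{M(\lambda)}\le\|B_+\|\,\|\ov{M(\lambda)}\|$. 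Hence, whenever $\|B_+\|\,\|\ov{M(\lambda)}\|<1$, we have $\max\sigma(N(\lambda))<1$, so $1\in\rho(N(\lambda))$ and thus $1\in\rho\bigl(B\ov{M(\lambda)}\bigr)$. Theorem~\ref{mainthm2} then yields that $A_{[B]}$ is self-adjoint, that $\lambda\in\rho(A_{[B]})$, and that the resolvent formula~\eqref{resformula2} holds. By condition~(i) we have $\|\ov{M(\lambda)}\|\to0$ as $\lambda\to-\infty$, so $\|B_+\|\,\|\ov{M(\lambda)}\|<1$ for all sufficiently negative $\lambda$; applying the above at one such point gives self-adjointness, and semiboundedness of $A_{[B]}$ follows since all sufficiently negative $\lambda$ lie in $\rho(A_{[B]})$.

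For the quantitative statements one only has to identify the threshold in $\|B_+\|\,\|\ov{M(\lambda)}\|<1$. For (a), if $B\le0$ then $\|B_+\|=0$ and $N(\lambda)\le0<1$ for every $\lambda<\min\sigma(A_0)$, so all such $\lambda$ lie in $\rho(A_{[B]})$ and $\min\sigma(A_{[B]})\ge\min\sigma(A_0)$. For (b), inserting~\eqref{decay} shows that $\|B_+\|\,\|\ov{M(\lambda)}\|<1$ is implied by $\|B_+\|\,C/(\mu-\lambda)^\alpha<1$, i.e.\ by $\lambda<\mu-(C\|B_+\|)^{1/\alpha}$; every such $\lambda$ satisfies $\lambda<\mu\le\min\sigma(A_0)$ and hence lies in $\rho(A_{[B]})$, which gives~\eqref{Bbound}. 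The genuinely delicate point is the boundedness of $B\ov{M(\lambda)}^{1/2}$ for the merely bounded-above, possibly unbounded, self-adjoint~$B$; once the closed graph argument secures this, the spectral reduction $\sigma\bigl(B\ov{M(\lambda)}\bigr)\setminus\{0\}=\sigma(N(\lambda))\setminus\{0\}$ and the form bound $N(\lambda)\le\|B_+\|\,\|\ov{M(\lambda)}\|$ are routine.
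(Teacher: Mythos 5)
Your proposal is correct and follows essentially the same route as the paper: both reduce to Theorem~\ref{mainthm2} at real points $\lambda<\min\sigma(A_0)$, secure boundedness of $B\ov{M(\lambda)}^{1/2}$ via the closed graph theorem, pass between $B\ov{M(\lambda)}$ and the symmetrized operator $\ov{M(\lambda)}^{1/2}B\,\ov{M(\lambda)}^{1/2}$ through the coincidence of nonzero spectra, and conclude $\lambda\in\rho(A_{[B]})$ under the identical threshold $\|B_+\|\,\bigl\|\ov{M(\lambda)}\bigr\|<1$, from which (a), (b) and semiboundedness follow exactly as in the paper. The only (harmless) divergence is one step: you bound the symmetrized operator directly by the quadratic-form estimate $N(\lambda)\le\|B_+\|\,\bigl\|\ov{M(\lambda)}\bigr\|$, whereas the paper first shows $\bigl\|B_+\ov{M(\lambda)}\bigr\|<1$, transfers this to $\ov{M(\lambda)}^{1/2}B_+\ov{M(\lambda)}^{1/2}$, and then invokes a lemma of Birman--Solomjak to subtract the non-negative part $\ov{M(\lambda)}^{1/2}B_-\ov{M(\lambda)}^{1/2}$ --- your form estimate is a slightly more streamlined version of the same argument.
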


\begin{proof}
Assumption (i) and the boundedness of $B_+$ imply that there exists
a $\mu_0<\min\sigma(A_0)$ such that
\begin{equation}\label{BplMle1}
  \bigl\|B_+\ov{M(\lambda)}\bigr\| < 1
\end{equation}
for all $\lambda\le\mu_0$.
In the following let $\lambda \leq \mu_0$.  Since
\[
  \sigma\bigl(\ov{M(\lambda)}^{1/2}B_+\ov{M(\lambda)}^{1/2}\bigr)\setminus\{0\}
  = \sigma\bigl(B_+\overline{M(\lambda)}\bigr)\setminus\{0\},
\]
relation \eqref{BplMle1} yields that
\begin{equation}\label{MBplMdelta}
  \sigma\bigl(\ov{M(\lambda)}^{1/2}B_+\ov{M(\lambda)}^{1/2}\bigr)
  \subset [-\beta,\beta]
\end{equation}
for some $\beta\in(0,1)$.
It follows from assumption (ii), the relation $\dom B_-=\dom B$ and the closed graph theorem
that
\[
 B_-\ov{M(\lambda)}^{1/2}
\]
is a bounded everywhere defined operator. Hence
\[
  \ov{M(\lambda)}^{1/2}B_-\ov{M(\lambda)}^{1/2}
\]
is a bounded, non-negative operator.
This, together with \eqref{MBplMdelta}, shows that
\[
  \sigma\bigl(\ov{M(\lambda)}^{1/2}B\ov{M(\lambda)}^{1/2}\bigr)
  \subset (-\infty,\beta];
\]
see, e.g.\ \cite[Lemma~3 in \S 9.4]{BS87}.
In particular, we have
\begin{equation}\label{oneinresolv}
  1 \in \rho\bigl(\ov{M(\lambda)}^{1/2}B\ov{M(\lambda)}^{1/2}\bigr).
\end{equation}
Since by the closed graph theorem the operator  $B\ov{M(\lambda)}^{1/2}$ is bounded,
it follows from
\[
  \sigma\bigl(\ov{M(\lambda)}^{1/2}B\ov{M(\lambda)}^{1/2}\bigr)\setminus\{0\}
  = \sigma\bigl(B\overline{M(\lambda)}\bigr)\setminus\{0\},
\]
and \eqref{oneinresolv}
that $1\in\rho(B\ov{M(\lambda)})$, i.e.\ condition (ii) in Theorem~\ref{mainthm2}
is satisfied for $\lambda_0 = \lambda$.  Moreover, conditions (ii)--(v) of the current theorem
imply conditions (iii)--(v) of Theorem~\ref{mainthm2}.
The latter theorem yields that $A_{[B]}$ is a self-adjoint extension of $S$
and that $\lambda\in\rho(A_{[B]})$ for all $\lambda\le\mu_0$, which shows that
$A_{[B]}$ is bounded from below.

(a)
Assume that $B\le0$ and let $\lambda<\min\sigma(A_0)$.  Then
\[
  \ov{M(\lambda)}^{1/2}B\ov{M(\lambda)}^{1/2} \le 0
\]
and hence \eqref{oneinresolv} is satisfied.  Therefore $1\in\rho(B\ov{M(\lambda)})$
and, as above, one concludes that $\lambda\in\rho(A_{[B]})$.
Hence $\min\sigma(A_{[B]})\ge\min\sigma(A_0)$.

(b)
Now assume that \eqref{decay} is satisfied and let
\[
  \lambda < \mu-\bigl(C\|B_+\|\bigr)^{1/\alpha}.
\]
Then
\[
  \bigl\|B_+\ov{M(\lambda)}\bigr\| \le \bigl\|B_+\bigr\|\,\bigl\|\ov{M(\lambda)}\bigr\|
  < \frac{(\mu-\lambda)^\alpha}{C}\cdot\frac{C}{(\mu-\lambda)^\alpha} = 1,
\]
i.e.\ \eqref{BplMle1} is satisfied.  The first part of the proof shows
that $\lambda\in\rho(A_{[B]})$, which proves \eqref{Bbound}.
\end{proof}

Theorem~\ref{thm:lowerBound} implies the
following asymptotic estimates on the lower bound of the extensions $A_{[\omega B]}$
of $S$ with $\omega \in \dR_+$ as $\omega\to+\infty$ and $\omega\to 0+$.

\begin{corollary}\label{cor:Asymptotics}
Let $B$ be a self-adjoint operator in $\cG$ which is bounded from above and let the assumptions {\rm(i)--(v)} and \eqref{decay} from Theorem~\ref{thm:lowerBound}
be satisfied.  Then the operators
\begin{equation*}
  A_{[\omega B]}f = Tf, \qquad
  \dom A_{[\omega B]} = \bigl\{f\in\dom T:\Gamma_0 f = \omega B \Gamma_1 f\bigr\}, \qquad
  \omega \geq 0,
\end{equation*}
are self-adjoint extensions of $S$, which are bounded from below.
Define the function $F(\omega) \defeq \min\sigma(A_0) - \min\sigma(A_{[\omega B]})$
and let $F^+$ be its positive part.
Then the following asymptotic estimates are satisfied:
\begin{myenuma}
\item $\bigl|\min\sigma(A_{[\omega B]})\bigr| = \cO(|\omega|^{1/\alpha})$
as $\omega \to + \infty$;
\item if the bound \eqref{decay} holds for $\mu = \min\sigma(A_0)$, then
$F^+(\omega) = \cO(|\omega|^{1/\alpha})$ as  $\omega \to 0 +$.
\end{myenuma}
\end{corollary}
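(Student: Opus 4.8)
The plan is to reduce both asymptotic statements to Theorem~\ref{thm:lowerBound} applied to the rescaled parameter $\omega B$, and then to complement the resulting lower bound \eqref{Bbound} with an $\omega$-independent upper bound for $\min\sigma(A_{[\omega B]})$.

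First I would check that for every fixed $\omega\ge0$ the operator $\omega B$ again satisfies the hypotheses of Theorem~\ref{thm:lowerBound}. It is self-adjoint and bounded from above, assumption~(i) concerns only $M$, and $\dom(\omega B)=\dom B$, so (ii) and (iv) hold verbatim; since $\ran\Gamma_0$ is a linear subspace, the inclusions $\omega B(\ran\ov{M(\lambda)})\subset\ran\Gamma_0$ and $\omega B(\ran\Gamma_1)\subset\ran\Gamma_0$ follow from (iii) and (v). Thus each $A_{[\omega B]}$ is self-adjoint, bounded from below, and obeys the resolvent formula. The only point to record is the scaling of the positive part: from the spectral representation one gets $(\omega B)_+=\omega B_+$ for $\omega\ge0$, hence $\|(\omega B)_+\|=\omega\|B_+\|$, and \eqref{Bbound} applied to $\omega B$ yields
\begin{equation*}
  \min\sigma(A_{[\omega B]})\ge\mu-\bigl(C\|B_+\|\bigr)^{1/\alpha}\omega^{1/\alpha},
  \qquad\omega\ge0 .
\end{equation*}

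For the matching upper bound I would use the common core $\dom S$. Every $f\in\dom S$ lies in $\dom A_0=\ker\Gamma_0$, so $\Gamma_0 f=0$; moreover, recalling $T^*=S$, one has $(Tf,g)=(f,Tg)$ for all $g\in\dom T$, so the Green identity \eqref{green} reduces to $(\Gamma_1 f,\Gamma_0 g)=0$, and the density of $\ran\Gamma_0$ forces $\Gamma_1 f=0$. Hence $f$ satisfies $\Gamma_0 f=\omega B\Gamma_1 f$ for every $\omega$, i.e.\ $\dom S\subset\dom A_{[\omega B]}$ with $A_{[\omega B]}f=Tf=Sf$ there. Fixing any $f_0\in\dom S\setminus\{0\}$ and using the variational characterization of the lower bound of the semibounded self-adjoint operator $A_{[\omega B]}$ then yields
\begin{equation*}
  \min\sigma(A_{[\omega B]})\le\frac{(Sf_0,f_0)}{\|f_0\|^2}=:c_0 ,
\end{equation*}
which does not depend on $\omega$.

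Combining the two bounds proves (a): for large $\omega$ one has $\mu-(C\|B_+\|)^{1/\alpha}\omega^{1/\alpha}\le\min\sigma(A_{[\omega B]})\le c_0$, whence $|\min\sigma(A_{[\omega B]})|=\cO(\omega^{1/\alpha})$ as $\omega\to+\infty$. For (b), with $\mu=\min\sigma(A_0)$ the lower bound rearranges to $F(\omega)=\min\sigma(A_0)-\min\sigma(A_{[\omega B]})\le(C\|B_+\|)^{1/\alpha}\omega^{1/\alpha}$; since the right-hand side is non-negative, the same estimate holds for the positive part $F^+$, giving $F^+(\omega)=\cO(\omega^{1/\alpha})$ as $\omega\to0+$. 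I expect the only non-mechanical step to be the uniform upper bound in (a), which rests on identifying $\dom S$ as a subspace common to all the domains $\dom A_{[\omega B]}$ on which the operators coincide with $S$; the remaining ingredients, namely the transfer of the hypotheses to $\omega B$ and the scaling identity $(\omega B)_+=\omega B_+$, are routine.
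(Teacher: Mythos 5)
Correct, and essentially the paper's own argument: both proofs rest on applying \eqref{Bbound} to $\omega B$ (with the scaling $\|(\omega B)_+\| = \omega\|B_+\|$) for the lower bound, complemented by an $\omega$-independent upper bound on $\min\sigma(A_{[\omega B]})$, from which (a) and (b) follow by rearranging. The only cosmetic difference is in that upper bound: the paper cites $\min\sigma(A_{[\omega B]})\le\min\sigma(A_{\rm F})<\infty$ for the Friedrichs extension $A_{\rm F}$ of $S$, while you rederive the same fact in elementary form via a fixed trial element $f_0\in\dom S$ --- and even your Green-identity detour establishing $\dom S\subset\dom A_{[\omega B]}$ is superfluous, since Theorem~\ref{thm:lowerBound} already asserts that $A_{[\omega B]}$ is a self-adjoint \emph{extension} of $S$.
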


\begin{proof}
The asymptotic estimate in (a) follows directly from \eqref{Bbound} and from
the fact that $\min\sigma(A_{[\omega B]})\le \min\sigma(A_{\rm F}) <\infty$,
where $A_{\rm F}$ is the Friedrichs extension of $S$.
The asymptotic estimate in (b) is again a straightforward consequence
of~\eqref{Bbound}.
\end{proof}

% *******************************************************************
\section{Elliptic differential operators on domains with \\
non-compact boundaries}\label{sec:PDE}
% *******************************************************************

\noindent
In this section we apply the abstract results from Section~\ref{sec2} to
second-order elliptic differential operators on unbounded domains
with non-compact boundaries. We refer the reader to \cite{BL07,BL12}
and \cite{BLL13IEOT,BM14} for related results for bounded and exterior domains, respectively.
Here we shall rely on classical results on the $H^2$-regularity of the
corresponding Dirichlet and Neumann realizations, and make use of
a set of assumptions that can be found in a more general context in \cite{B65}.

Let $\Omega$ be a domain in $\RR^n$ which is \emph{uniformly regular} in the sense
of~\cite[page~72]{F67}; see also \cite{B65,B61}.
This means that $\partial\Omega$ is $C^\infty$-smooth and that
there exists a covering of $\Omega$ by open sets $\Omega_j$, $j\in\dN$, and $n_0\in\dN$
such that at most $n_0$ of the $\Omega_j$ have a non-empty intersection,
and
a family of $C^\infty$-homeomorphisms
\[
  \varphi_j:\Omega_j\cap\Omega\rightarrow \cB_1\cap\{x_n>0\}, \qquad
  \text{where}\quad \cB_r=\{x\in\RR^n:\Vert x\Vert < r\},
\]
such that $\varphi_j:\Omega_j\cap\partial\Omega\rightarrow \cB_1\cap\{x_n=0\}$,
the derivatives of $\varphi_j$, $j\in\dN$, and their inverses are uniformly bounded, and
$\bigcup_j\varphi^{-1}_j(\cB_{1/2})$ covers a uniform neighbourhood of $\partial\Omega$.
Note that these assumptions are automatically fulfilled, e.g.\
for domains with compact $C^\infty$-smooth boundaries or for compact,
smooth perturbations of half-spaces.
Let
\begin{equation*}
  \cL  = - \sum_{j, k = 1}^n \frac{\partial}{\partial x_j} a_{jk}
  \frac{\partial}{\partial x_k} + a
\end{equation*}
be a differential expression on $\Omega$ with bounded
coefficients $a_{j k}\in C^\infty(\overline\Omega)$
satisfying $a_{jk} (x) = \overline{a_{k j} (x)}$ for all $x \in \overline{\Omega}$,
and having bounded, uniformly continuous derivatives on $\overline\Omega$;
cf.\ \cite[(S1)--(S5) in Chapter~4]{B65}.
Moreover, it is assumed that $a\in L^\infty(\Omega)$ is real-valued
and that $\cL$ is uniformly elliptic, i.e.\ there exists an $E > 0$ such that
\begin{equation}\label{ellipticity}
  \sum_{j,k=1}^n a_{jk}(x)\xi_j\xi_k \ge E \sum_{k=1}^n\xi_k^2, \qquad
  \xi=(\xi_1,\dots\xi_n)^\top \in \dR^n,\;\; x \in \overline \Omega.
\end{equation}

In the following we denote by $H^s (\Omega)$ and $H^s (\partial \Omega)$ the
Sobolev spaces of order $s \geq 0$ on $\Omega$ and
$\partial \Omega$, respectively.
For $f \in C_0^\infty(\ov\Omega)\defeq\{g|_{\ov\Omega}:g\in C_0^\infty(\RR^n)\}$ let
\begin{align*}
  \frac{\partial f}{\partial \nu_\cL} \Big|_{\partial \Omega}
  \defeq \sum_{j,k = 1}^n a_{jk} \nu_j \frac{\partial f}{\partial x_k}\Big|_{\partial\Omega}
\end{align*}
be the co-normal derivative of $f$ at $\partial \Omega$ with respect to~$\cL$,
where $\nu = (\nu_1, \dots, \nu_n)^\top$ is the unit normal vector field
at $\partial \Omega$ pointing outwards.
Then Green's identity
\begin{align}\label{eq:Green}
  (\cL f, g) - (f, \cL g) = \left( f |_{\partial \Omega}, \frac{\partial g}{\partial \nu_\cL} \Big|_{\partial \Omega} \right)
  - \left(\frac{\partial f}{\partial \nu_\cL} \Big|_{\partial \Omega}, g |_{\partial \Omega} \right)
\end{align}
holds for all $f, g \in C_0^\infty (\overline \Omega)$
(see, e.g.\ \cite[Theorem~4.4]{F67}); here the inner products
on the left-hand side are in $L^2(\Omega)$ and the inner
products on the right-hand side are in $L^2 (\partial \Omega)$.
Recall that the mapping
\begin{align*}
 f \mapsto \left\{f |_{\partial \Omega}, \frac{\partial f}{\partial \nu_\cL} \Big|_{\partial \Omega} \right\},
 \qquad f\in C_0^\infty (\overline{\Omega}),
\end{align*}
extends by continuity to a bounded, surjective map from $H^2 (\Omega)$
to $H^{3/2} (\partial \Omega) \times H^{1/2} (\partial \Omega)$,
and that Green's identity~\eqref{eq:Green} extends to all $f, g \in H^2 (\Omega)$;
see, e.g.\ \cite[Theorem~3.9]{F67}. For the extended trace and normal derivative
we write again $f |_{\partial \Omega}$ and
$\frac{\partial f}{\partial \nu_\cL} |_{\partial \Omega}$, respectively.

In order to construct a quasi boundary triple, consider the operators $S$ and $T$
in $L^2(\Omega)$ given by
\begin{align}\label{eq:Selliptic}
  S f = \cL f, \qquad
  \dom S = \left\{ f \in H^2 (\Omega) : f |_{\partial \Omega}
  = \frac{\partial f}{\partial \nu_\cL} \Big|_{\partial \Omega} = 0 \right\},
\end{align}
and
\begin{align}\label{eq:Telliptic}
  T f = \cL f, \qquad \dom T = H^2 (\Omega).
\end{align}
The proof of the following proposition is similar to the proof
of~\cite[Proposition~4.6]{BL07} and is omitted.
We only mention that the self-adjointness of $A_{\rm N}$ in (ii)
follows from \cite[Theorem~7.1\,(a) and Theorem~7.2]{B65}.

\begin{proposition}\label{prop:QBTelliptic}
The operator $S$ in~\eqref{eq:Selliptic} is closed, symmetric and densely defined
with $\overline{T} = S^*$, and the triple
$\{L^2 (\partial \Omega), \Gamma_0, \Gamma_1\}$ with
\[
  \Gamma_0 f = \frac{\partial f}{\partial\nu_\cL}\Big|_{\partial\Omega}, \qquad
  \Gamma_1 f = f|_{\partial\Omega}, \qquad f \in \dom T,
\]
is a quasi boundary triple for $T\subset S^*$ with the following properties:
\begin{myenum}
  \item
    $\ran(\Gamma_0,\Gamma_1)^\top = H^{1/2}(\partial\Omega) \times H^{3/2} (\partial\Omega)$;
  \item
    $A_0 = T \upharpoonright \ker \Gamma_0$ coincides with the
    self-adjoint \emph{Neumann operator}
    \begin{align*}
      A_{\rm N} f = \cL f, \qquad \dom A_{\rm N} = \left\{ f \in H^2 (\Omega) :
      \frac{\partial f}{\partial \nu_\cL} \Big|_{\partial \Omega} = 0 \right\},
     \end{align*}
    and $A_1 = T \upharpoonright \ker \Gamma_1$ coincides with the
    self-adjoint \emph{Dirichlet operator}
    \begin{align*}
      A_{\rm D} f = \cL f, \qquad \dom A_{\rm D} = \bigl\{ f \in H^2 (\Omega) :
      f|_{\partial \Omega} = 0 \bigr\};
    \end{align*}
    in particular, $A_0$ and $A_1$ are bounded from below by $\essinf a$.
\end{myenum}
\end{proposition}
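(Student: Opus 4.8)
The plan is to verify the three defining properties of a quasi boundary triple together with the auxiliary assertions, reducing everything to the trace theory on $H^2(\Omega)$ and to the classical global elliptic regularity recorded in \cite{B65,F67}. First I would observe that, with the choice $\Gamma_0 f = \frac{\partial f}{\partial\nu_\cL}\big|_{\partial\Omega}$ and $\Gamma_1 f = f|_{\partial\Omega}$, the abstract Green identity \eqref{green} is nothing but the second Green identity \eqref{eq:Green}, which is already known to hold for all $f,g \in H^2(\Omega) = \dom T$; this settles condition~(i) of the definition. Restricting \eqref{eq:Green} to $f,g \in \dom S$, where both traces vanish, shows at once that $S$ is symmetric, and since $C_0^\infty(\overline\Omega) \supset C_0^\infty(\Omega) \subset \dom S$ the operator $S$ is densely defined (as is $T$, since $H^2(\Omega)$ is dense in $L^2(\Omega)$).

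For the range condition I would invoke the surjectivity of the joint trace map: the mapping $f \mapsto \{f|_{\partial\Omega},\ \frac{\partial f}{\partial\nu_\cL}|_{\partial\Omega}\}$ is bounded and onto from $H^2(\Omega)$ to $H^{3/2}(\partial\Omega)\times H^{1/2}(\partial\Omega)$. Reading off the two components in the order dictated by $\Gamma_0,\Gamma_1$ gives exactly $\ran(\Gamma_0,\Gamma_1)^\top = H^{1/2}(\partial\Omega)\times H^{3/2}(\partial\Omega)$, which is property~(i) of the proposition; since this product is dense in $L^2(\partial\Omega)\times L^2(\partial\Omega)$, condition~(ii) of the definition follows. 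For property~(ii) of the proposition and condition~(iii) of the definition I would identify $\ker\Gamma_0 = \{f\in H^2(\Omega): \frac{\partial f}{\partial\nu_\cL}|_{\partial\Omega}=0\}$ and $\ker\Gamma_1 = \{f\in H^2(\Omega): f|_{\partial\Omega}=0\}$, so that $A_0 = \AN$ and $A_1 = \AD$; their self-adjointness is precisely the content of the cited results of \cite{B65,F67}, and the lower bound $\essinf a$ follows from the coercivity furnished by the uniform ellipticity \eqref{ellipticity} together with $a\in L^\infty(\Omega)$ being real-valued.

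It remains to prove $\overline T = S^*$, and I would do this by showing $T^* = S$, since then $\overline T = T^{**} = S^*$ and, as an adjoint, $S$ is automatically closed. The inclusion $S \subset T^*$ is immediate from Green's identity: for $g \in \dom S$ the boundary terms in \eqref{eq:Green} vanish, so $(Tf,g) = (f,\cL g)$ for all $f \in H^2(\Omega)$. The reverse inclusion $T^* \subset S$ is the heart of the matter and the step I expect to be the main obstacle. Testing the relation $(\cL f,g) = (f,T^*g)$ against $f \in C_0^\infty(\Omega)$ shows $\cL g = T^*g \in L^2(\Omega)$ in the distributional sense, but this alone does not place $g$ in $H^2(\Omega)$, because the maximal domain is strictly larger. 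One must exploit that the identity holds for \emph{all} $f \in H^2(\Omega)$, whose traces exhaust $H^{3/2}(\partial\Omega)\times H^{1/2}(\partial\Omega)$; via the extended Green formula this forces the generalized Dirichlet and Neumann traces of $g$ to vanish, whereupon global elliptic regularity up to the non-compact boundary yields $g \in H^2(\Omega)$ with $g|_{\partial\Omega} = \frac{\partial g}{\partial\nu_\cL}|_{\partial\Omega} = 0$, i.e.\ $g \in \dom S$.

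Controlling this regularity uniformly across the infinitely many boundary charts $\varphi_j$, rather than on a single compact piece of boundary, is exactly where the uniform regularity hypothesis on $\Omega$ becomes indispensable and where the argument genuinely differs from the compact-boundary case treated in \cite[Proposition~4.6]{BL07}; the required uniform global $H^2$-estimates are supplied by \cite{B65,F67}. Once $T^* = S$ is established, the closedness of $S$, the relation $\overline T = S^*$, and hence the fact that $\{L^2(\partial\Omega),\Gamma_0,\Gamma_1\}$ is a quasi boundary triple with the stated properties all follow, completing the proof.
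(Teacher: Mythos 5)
Your verification of the Green identity, of the range condition (i) via the surjectivity of the joint trace map $f\mapsto\{f|_{\partial\Omega},\frac{\partial f}{\partial\nu_\cL}|_{\partial\Omega}\}$, of the identifications $A_0=\AN$, $A_1=\AD$ with the self-adjointness taken from \cite{B65}, and of the lower bound $\essinf a$ all match the intended argument. Note, however, that the paper deliberately omits a direct computation of $T^*$: its proof is ``similar to \cite[Proposition~4.6]{BL07}'', whose engine is the abstract criterion \cite[Theorem~2.3]{BL07} --- once one has the Green identity on $\dom T$, density of $\ran(\Gamma_0,\Gamma_1)^\top$ in $\cG\times\cG$, density of $\ker\Gamma_0\cap\ker\Gamma_1$ in $\cH$ (clear from $C_0^\infty(\Omega)\subset\dom S$), and self-adjointness of $T\upharpoonright\ker\Gamma_0$, that theorem already delivers that $S$ is closed, symmetric and densely defined with $\overline T=S^*$. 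The only hard PDE inputs are the Beals citation and the trace theorem from \cite{F67}.

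Your direct route through $T^*=S$ is legitimate in principle, but the step you yourself flag as the heart --- forcing the \emph{generalized} Dirichlet and Neumann traces of $g\in\dom T^*$ to vanish ``via the extended Green formula'' and then invoking ``global elliptic regularity up to the non-compact boundary'' --- is a genuine gap as written. An extended Green identity with traces of maximal-domain functions in negative-order Sobolev spaces, and $H^2$-regularity for maximal-domain functions with vanishing generalized Dirichlet trace, are precisely the delicate points on a uniformly regular domain with non-compact boundary; the references you lean on (\cite{B65,F67}) do not supply them in that form, and the compact-boundary versions do not transfer verbatim. Fortunately the detour is unnecessary, using an ingredient you already have: since $\AN\subset T$ and $\AN$ is self-adjoint, any $g\in\dom T^*$ satisfies $(\AN f,g)=(f,T^*g)$ for all $f\in\dom\AN$, hence $g\in\dom\AN^*=\dom\AN\subset H^2(\Omega)$ --- no maximal-domain trace theory is needed at all. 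Once $g\in H^2(\Omega)$, the Green identity for $H^2$-functions yields $(\Gamma_1 f,\Gamma_0 g)-(\Gamma_0 f,\Gamma_1 g)=0$ for all $f\in H^2(\Omega)$, and since $\ran(\Gamma_0,\Gamma_1)^\top=H^{1/2}(\partial\Omega)\times H^{3/2}(\partial\Omega)$ is dense in $L^2(\partial\Omega)\times L^2(\partial\Omega)$, both traces of $g$ vanish, i.e.\ $g\in\dom S$. Together with the easy inclusion $S\subset T^*$ this gives $T^*=S$ and $\overline T=T^{**}=S^*$; with this repair your proof closes, and it in fact reproduces in concrete form exactly what the abstract criterion of \cite{BL07} accomplishes for the paper.
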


\medskip

\noindent
In the next proposition we prove a couple of properties of the Weyl function,
which turns out to be the Neumann-to-Dirichlet map.
These properties are needed in order to apply the results from Section~\ref{sec2}.
In particular, in (iv) we prove a decay estimate for the Weyl function,
whose proof is based on an argument due to S.~Agmon \cite[Section~2]{A62};
related methods were also used in the proof of \cite[Theorem~4.1]{AGW14}.
This estimate can also be shown using the calculus of parameter-dependent
pseudo-differential operators provided in, e.g.\ \cite[Chapter~2]{G96}.

\begin{proposition}\label{prop:Weyl_elliptic}
Let $\{L^2(\partial\Omega),\Gamma_0,\Gamma_1\}$
be the quasi boundary triple from Proposition~\ref{prop:QBTelliptic}
and let $M$ be the corresponding Weyl function.
\begin{myenum}
  \item
    The function~$M$ is given by the \emph{Neumann-to-Dirichlet map}, i.e.\
    it satisfies
    \begin{align*}
      M(\lambda)\frac{\partial f}{\partial \nu_\cL} \Big|_{\partial \Omega}
      = f |_{\partial \Omega}, \qquad
      f \in \ker(T-\lambda),\;\; \lambda \in \rho(A_{\rm N}).
    \end{align*}
    Moreover,
    \begin{equation}\label{eq:spaces}
      \dom M (\lambda) = H^{1/2} (\partial \Omega)\quad\text{and}\quad
      \ran M (\lambda) \subset H^{3/2} (\partial \Omega)
    \end{equation}
    for each $\lambda \in \rho (A_{\rm N})$. If $\lambda < \min \sigma (A_{\rm N})$
    then $\ran M(\lambda) = H^{3/2} (\partial \Omega)$ and $\ker M(\lambda)=\{0\}$.
  \item For all $\lambda \in \rho (A_{\rm N})$ the operator $M(\lambda)$ is bounded
    and non-closed in $L^2 (\partial \Omega)$, and its closure
    satisfies $\ran \overline{M (\lambda)} \subset H^1(\partial\Omega)$.
  \item
    For $\lambda<\min\sigma(A_{\rm N})$ the operator $M(\lambda)$ is non-negative
    and satisfies
    \begin{equation}\label{eq:MRanSquareroot}
      \ran \overline{M (\lambda)}^{1/2} = H^{1/2}(\partial\Omega).
    \end{equation}
  \item
    For each $\mu<\min\sigma(A_{\rm N})$ there exists a
    constant $C=C(\cL,\Omega,\mu)$ such that
    \begin{equation}\label{estimate}
       \big\| \overline{M (\lambda)} \big\| \leq \frac{C}{( \mu - \lambda)^{1/2}}\,, \qquad
       \lambda < \mu.
    \end{equation}
\end{myenum}
\end{proposition}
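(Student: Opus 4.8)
The plan is to treat the four parts in an order dictated by their logical dependence rather than the order stated: part (i) is essentially definitional, the decay estimate (iv) is the analytic core, and parts (ii) and (iii) are then harvested from (iv) together with a single sesquilinear-form identity. For (i) I would simply unwind $\gamma(\lambda)=(\Gamma_0\restr{\ker(T-\lambda)})^{-1}$ and $M(\lambda)=\Gamma_1\gamma(\lambda)$: for $f\in\ker(T-\lambda)$ one has $\Gamma_0 f=\frac{\partial f}{\partial\nu_\cL}|_{\partial\Omega}$ and $M(\lambda)\Gamma_0 f=\Gamma_1 f=f|_{\partial\Omega}$, which is exactly the Neumann-to-Dirichlet map. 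The identities $\dom M(\lambda)=\ran\Gamma_0=H^{1/2}(\partial\Omega)$ and $\ran M(\lambda)\subset\ran\Gamma_1=H^{3/2}(\partial\Omega)$ are read off from Proposition~\ref{prop:QBTelliptic}(i). For $\lambda<\min\sigma(A_{\rm N})$ the min--max principle (the form domains satisfy $H^1(\Omega)\supset H^1_0(\Omega)$, whence $\min\sigma(A_{\rm N})\le\min\sigma(A_{\rm D})$) puts $\lambda\in\rho(A_{\rm D})$; solving the Dirichlet problem $(\cL-\lambda)f=0$, $f|_{\partial\Omega}=g$ for arbitrary $g\in H^{3/2}(\partial\Omega)$ gives surjectivity onto $H^{3/2}(\partial\Omega)$, while $M(\lambda)\varphi=0$ forces $f_\varphi\in\ker(A_{\rm D}-\lambda)=\{0\}$, hence $\ker M(\lambda)=\{0\}$.

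The workhorse for the remaining parts is Green's first identity. Writing $\mathfrak{a}_\lambda[f,g]=\int_\Omega\bigl(\sum_{j,k}a_{jk}\,\partial_k f\,\overline{\partial_j g}+(a-\lambda)f\overline g\bigr)\,dx$, integration by parts gives, for $f\in\ker(T-\lambda)$ with $\varphi:=\Gamma_0 f$ and $f_\varphi=\gamma(\lambda)\varphi$, the identity $(M(\lambda)\varphi,\varphi)=\mathfrak{a}_\lambda[f_\varphi,f_\varphi]$. For $\lambda<\min\sigma(A_{\rm N})$ the shifted Neumann form is non-negative, so this already delivers the non-negativity of $M(\lambda)$ in (iii) (consistent with Lemma~\ref{le:Mnonneg}).

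For the decay estimate (iv) — which I expect to be the main obstacle — I would follow Agmon's scaling argument. The two ingredients are coercivity, $\mathfrak{a}_\lambda[f_\varphi]\ge c\bigl(\|\nabla f_\varphi\|^2+(\mu-\lambda)\|f_\varphi\|^2\bigr)$ for $\lambda<\mu<\min\sigma(A_{\rm N})$, and a \emph{parameter-dependent trace inequality} $\|f|_{\partial\Omega}\|_{L^2(\partial\Omega)}^2\le C\bigl(\delta\|\nabla f\|^2+\delta^{-1}\|f\|^2\bigr)$ valid for all $\delta>0$ with a constant $C$ that is uniform over the whole non-compact boundary. Granting these, one bounds $\mathfrak{a}_\lambda[f_\varphi]=\real(\varphi,f_\varphi|_{\partial\Omega})\le\|\varphi\|\,\|M(\lambda)\varphi\|$, chooses $\delta=(\mu-\lambda)^{-1/2}$ in the trace inequality, and combines the two displays to obtain $\|M(\lambda)\varphi\|_{L^2(\partial\Omega)}\le C(\mu-\lambda)^{-1/2}\|\varphi\|_{L^2(\partial\Omega)}$, i.e.\ \eqref{estimate}. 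The genuinely hard point is securing the trace inequality with a \emph{uniform} constant despite the non-compactness of $\partial\Omega$; this is where the uniform-regularity assumptions on $\Omega$ enter, via localization with the uniformly bounded charts $\varphi_j$ and reduction to the half-space model — the step for which the paper credits Agmon~\cite{A62}.

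Finally I would read off (ii) and the square-root statement in (iii). Boundedness of $M(\lambda)$ in $L^2(\partial\Omega)$ holds for real $\lambda<\mu$ by \eqref{estimate}, hence for every $\lambda\in\rho(A_{\rm N})$ by the one-point principle \cite[Proposition~3.3(viii)]{BLL13IEOT}; non-closedness is then automatic, since the bounded operator $M(\lambda)$ has the non-closed domain $H^{1/2}(\partial\Omega)\subsetneq L^2(\partial\Omega)$. For $\ran\overline{M(\lambda)}\subset H^1(\partial\Omega)$ I would use that $M(\lambda)\colon H^{1/2}\to H^{3/2}$ is bounded (elliptic regularity of the Neumann solution operator) and, by symmetry and duality for real $\lambda$, also $M(\lambda)\colon H^{-1/2}\to H^{1/2}$, and then interpolate to get $\overline{M(\lambda)}\colon L^2\to H^1$. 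For \eqref{eq:MRanSquareroot}, the form identity together with the isomorphism $\varphi\mapsto f_\varphi$ from $H^{-1/2}(\partial\Omega)$ onto the space of $H^1(\Omega)$-solutions gives $\|\overline{M(\lambda)}^{1/2}\varphi\|_{L^2}^2=\mathfrak{a}_\lambda[f_\varphi]\simeq\|f_\varphi\|_{H^1(\Omega)}^2\simeq\|\varphi\|_{H^{-1/2}(\partial\Omega)}^2$; thus $\overline{M(\lambda)}^{1/2}$ extends to an isomorphism $H^{-1/2}\to L^2$, and by its self-adjointness the dual statement $\overline{M(\lambda)}^{1/2}\colon L^2\to H^{1/2}$ is an isomorphism as well, which is precisely $\ran\overline{M(\lambda)}^{1/2}=H^{1/2}(\partial\Omega)$.
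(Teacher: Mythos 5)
Your proposal is correct, and for the central part (iv) it takes a genuinely different route from the paper. The paper proves \eqref{estimate} by Agmon's dimension-extension trick: it passes to $\cL_\dS=\cL-\partial^2/\partial t^2$ on $\Omega\times\dS^1$, uses that the corresponding Neumann-to-Dirichlet map $M_\dS(\lambda_0)$ at one \emph{fixed} $\lambda_0<\min\sigma(A_{\rm N})$ maps $L^2(\partial\Omega\times\dS^1)$ boundedly into $H^1(\partial\Omega\times\dS^1)$, tests it on separated functions $e^{ikt}\bigl(\gamma(\lambda_0-k^2)\phi\bigr)(x)$ so that the $t$-derivative in the $H^1$-norm yields $\|\ov{M(\lambda_0-k^2)}\|\le C'/k$, and then fills in between the points $\lambda_0-k^2$ by monotonicity of $\lambda\mapsto\|\ov{M(\lambda)}\|$ (Lemma~\ref{le:Mnonneg}\,(i) together with non-negativity); this produces the explicit constant of Remark~\ref{rem:Konstante} and needs no quantified trace estimate, only the qualitative $H^1$-smoothing of the ND map in one extra dimension. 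Your argument instead combines the coercivity $\fra[f]-\lambda\|f\|^2\ge c\bigl(\|\nabla f\|^2+(\mu-\lambda)\|f\|^2\bigr)$ -- which, with $\eps$ chosen so that $\eps\essinf a+(1-\eps)\min\sigma(A_{\rm N})\ge\mu$, is exactly the computation the paper performs for \eqref{eq:equivNorm}, uniformly in $\lambda<\mu$ -- with a $\delta$-dependent trace inequality, and your chain $\|M(\lambda)\varphi\|^2\le C(\mu-\lambda)^{-1/2}\bigl(\|\nabla f_\varphi\|^2+(\mu-\lambda)\|f_\varphi\|^2\bigr)\le Cc^{-1}(\mu-\lambda)^{-1/2}\|\varphi\|\,\|M(\lambda)\varphi\|$ is sound and gives the exponent $1/2$ directly; this is in effect the quadratic-form alternative the paper itself gestures at in Remark~\ref{formremark}. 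The trade-off is that your route stands or falls with the \emph{uniform} inequality $\|f|_{\partial\Omega}\|^2\le C(\delta\|\nabla f\|^2+\delta^{-1}\|f\|^2)$ on the non-compact boundary, which you correctly identify as the crux: \cite[Lemma~3.1]{F67} gives only an unquantified $\beta(\eps)$, and the quantified form $\beta(\eps)=c/\eps$ (cf.\ \cite[Lemma~2.5]{GM08} for bounded Lipschitz domains) must be secured via the uniform charts, the finite-overlap property and the half-space estimate $\|u(\cdot,0)\|^2\le2\|u\|\,\|\partial_n u\|$ -- fillable, and you have located it exactly where the paper invokes \cite{A62} instead. Two small patches: for $\lambda\in[\mu-1,\mu)$ your choice $\delta=(\mu-\lambda)^{-1/2}>1$ may leave the regime where the localized trace inequality is available, so one should finish there by monotonicity of $\|\ov{M(\lambda)}\|$, precisely as the paper does; and in (ii) your duality argument for $\ran\ov{M(\lambda)}\subset H^1(\partial\Omega)$ is stated for real $\lambda$, so for non-real $\lambda$ one should use $M(\lambda)\subset M(\ov\lambda)^*$ (this is in substance what the cited \cite[Lemma~4.4]{BLL13IEOT} does, the paper simply quoting it). Parts (i) and the non-negativity in (iii) coincide with the paper's arguments, and your proof of \eqref{eq:MRanSquareroot} via the two-sided equivalence $\|\ov{M(\lambda)}^{1/2}\varphi\|\simeq\|\varphi\|_{H^{-1/2}(\partial\Omega)}$ plus duality is the same mathematical content as the paper's closure of the form $\frt_\lambda[\phi]=(M(\lambda)^{-1}\phi,\phi)$ with form domain $H^{1/2}(\partial\Omega)$, merely packaged through the Neumann-data isomorphism $H^{-1/2}(\partial\Omega)\to\{g\in H^1(\Omega):\cL g=\lambda g\}$ rather than the Dirichlet-trace isomorphism used in the paper.
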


\begin{proof}
(i) The representation of the Weyl function and the assertion~\eqref{eq:spaces}
follow directly from the definition of the boundary maps and the Weyl function.
Since $A_{\rm D}$ is the Friedrichs extension of $S$,
each $\lambda < \min \sigma (A_{\rm N})$ belongs to $\rho (A_{\rm D})$.
Thus the Dirichlet-to-Neumann map
$f |_{\partial \Omega} \mapsto \frac{\partial f}{\partial \nu_\cL} |_{\partial \Omega}$,
$f \in \ker (T - \lambda)$, is well defined with domain equal to $H^{3/2} (\partial \Omega)$.
Since its inverse is given by~$M (\lambda)$,
we have $\ran M (\lambda) = H^{3/2} (\partial \Omega)$ and $\ker M(\lambda)=\{0\}$.

(ii) These properties can be shown in the same way as in \cite[Lemma~4.4]{BLL13IEOT}.

(iii)
Let $\fra$ be the quadratic form corresponding to the Neumann operator, i.e.\
\begin{equation}\label{defa}
  \begin{aligned}
    \fra[f] &\defeq \fra_1[f]+(af,f) \\[1ex]
    &\defeq \int_\Omega \biggl(\sum_{j,k = 1}^n a_{jk}
    \frac{\partial f}{\partial x_k} \cdot \frac{\partial\ov f}{\partial x_j}
    \biggr)\drm x + (af,f),
    \qquad f\in\dom\fra \defeq H^1(\Omega),
  \end{aligned}
\end{equation}
where $(\cdot, \cdot)$ is the inner product in~$L^2 (\Omega)$.
Let $\lambda<\min\sigma(A_{\rm N})$, $\varphi\in H^{1/2}(\partial\Omega)$
and let $f \in \ker(T-\lambda)$ such that
$\varphi = \frac{\partial f}{\partial\nu_\cL}\big|_{\partial\Omega}$,
i.e.\ $f=\gamma(\lambda)\varphi$.
Then Green's first identity yields
\begin{align}\label{eq:MEllipticNonnegative}
  \bigl(M(\lambda)\phi,\phi\bigr)
  &= \biggl(f|_{\partial\Omega},
  \frac{\partial f}{\partial\nu_\cL}\Big|_{\partial\Omega}\biggr)
  = - (f,\cL f) + \fra[f] \notag \\
  & \ge \bigl(-\lambda + \min \sigma(A_{\rm N})\bigr)\|f\|^2_{L^2(\Omega)}
  \ge 0,
\end{align}
which shows that $M(\lambda)$ is a non-negative operator.

Next we show \eqref{eq:MRanSquareroot}.
Let $\lambda < \min \sigma(A_{\rm N})$ and consider the quadratic
form in $L^2(\partial\Omega)$ defined by
\begin{align*}
  \frt_\lambda[\phi] \defeq \bigl(M(\lambda)^{-1}\phi,\phi\bigr), \qquad
  \dom\frt_\lambda = H^{3/2}(\partial\Omega),
\end{align*}
which is well defined by item (i).
The form $\frt_\lambda$ is densely defined, symmetric
and non-negative by \eqref{eq:MEllipticNonnegative}.
There exist constants $\wt c_1, \wt c_2>0$ such that
\begin{equation}\label{eq:equivNorm}
  \wt c_1 \|f\|_{H^1(\Omega)}^2 \le \fra [f] - \lambda\|f\|_{L^2(\Omega)}^2
  \le \wt c_2 \|f\|_{H^1(\Omega)}^2, \qquad f \in H^1(\Omega);
\end{equation}
to see the first inequality, set $a_0\defeq\essinf a$, $\sigma_0\defeq\min\sigma(A_{\rm N})$
and let $\eps>0$.
Then (where we write $\|\cdot\|$ for $L^2$-norms and use $E$ from \eqref{ellipticity})
\begin{align*}
  \fra[f]-\lambda\|f\|^2
  &= \eps\fra_1[f] + \eps(af,f) + (1-\eps)\fra[f] - \lambda\|f\|^2 \\[1ex]
  &\ge \eps E\|\nabla f\|^2 + \eps a_0\|f\|^2 + (1-\eps)\sigma_0\|f\|^2 - \lambda\|f\|^2 \\[1ex]
  &\ge \min\bigl\{\eps E,\eps a_0+(1-\eps)\sigma_0-\lambda\bigr\}\|f\|_{H^1(\Omega)}^2.
\end{align*}
If $\eps$ is small enough, then the last minimum is a positive number.
Hence the first inequality in \eqref{eq:equivNorm} is shown.
The second inequality follows easily from the boundedness of the coefficients.

For each $\varphi\in H^{3/2}(\partial\Omega)$ there exists $f\in\ker(T-\lambda)$
such that $f|_{\partial\Omega} = \phi$.  Similarly to \eqref{eq:MEllipticNonnegative}
one obtains
\begin{align*}
  \frt_\lambda[\phi] &= \bigl(M(\lambda)^{-1}\varphi,\varphi\bigr)
  = \biggl(\frac{\partial f}{\partial\nu_\cL}\Big|_{\partial\Omega},
  f|_{\partial\Omega}\biggr)
  \\[1ex]
  &= -(\cL f,f)+\fra[f] = \fra[f] - \lambda\|f\|_{L^2(\Omega)}^2, \quad \varphi \in H^{3/2} (\partial \Omega).
\end{align*}
Together with \eqref{eq:equivNorm} this yields
\[
  \wt c_1\|f\|_{H^1(\Omega)}^2 \le \frt_\lambda[\varphi]
  \le \wt c_2\|f\|_{H^1(\Omega)}^2
\]
for all $\varphi \in H^{3/2} (\partial \Omega)$ and corresponding $f \in \ker (T - \lambda)$
with $f |_{\partial \Omega} = \varphi$. Since the trace map provides an isomorphism
from $\{g \in H^1(\Omega): \cL g = \lambda g\}$
onto $H^{1/2}(\partial \Omega)$, it follows that there exist $c,C>0$ such that
\begin{align*}
  c \|\phi\|_{H^{1/2}(\partial\Omega)}^2
  \le \frt_\lambda [\phi]
  \le C \|\phi\|_{H^{1/2}(\partial\Omega)}^2, \qquad
  \phi \in H^{3/2}(\partial\Omega).
\end{align*}
Hence the domain of the closure of $\frt_\lambda$
equals $H^{1/2} (\partial \Omega)$.
From this we obtain \eqref{eq:MRanSquareroot} because $\overline{M(\lambda)}^{\,-1}$
is the self-adjoint operator that corresponds to the closure of $\frt_\lambda$.

(iv)
Let $\dS^1=\dR/(2\pi\dZ)$ be the one-dimensional torus and consider the
product $\Omega \times \dS^1$. On this manifold we consider
the elliptic differential expression
\begin{equation*}
\label{cLS}
  \cL_{\dS} = \cL - \frac{\partial ^2}{\partial t^2},
\end{equation*}
where $t$ denotes the variable in $\dS^1$ and $\cL$ acts with respect to the
variable $x \in \Omega$. The manifold $\partial \Omega \times \dS^1$ is the
boundary of $\Omega \times \dS^1$. For $s \geq 0$ let $H^s(\Omega \times \dS^1)$
and $H^s(\partial \Omega \times \dS^1)$ be the Sobolev spaces on $\Omega \times \dS^1$
and $\partial \Omega \times \dS^1$, respectively.  On $H^2 (\Omega \times \dS^1)$
we can define traces $f|_{\partial \Omega \times \dS^1}$
and normal derivatives
$\frac{\partial f}{\partial \nu_{\cL_\dS}} |_{\partial \Omega \times \dS^1}$.
Note that, for functions of the form $f(x,t)=g(x)h(t)$ with $g\in H^2 (\Omega)$
and $h\in C^\infty(\dS^1)$, we have
\begin{equation}\label{trprod}
  f |_{\partial \Omega \times \dS^1} = h\cdot g |_{\partial \Omega} \qquad \text{and} \qquad
  \frac{\partial f}{\partial \nu_{\cL_\dS}} \big|_{\partial \Omega \times \dS^1}
  = h \cdot \frac{\partial g}{\partial \nu_\cL} \big|_{\partial \Omega}.
\end{equation}
Next let us introduce the operator
\begin{align*}
  T_\dS f &= \cL_\dS f, \qquad \dom T_{\dS} = H^2 (\Omega \times \dS^1),
\end{align*}
in the space $L^2(\Omega \times \dS^1)$ and the
triple $\{L^2(\partial \Omega \times \dS^1), \Gamma_0^\dS,\Gamma_1^\dS\}$ where
\[
  \Gamma_0^\dS f = \frac{\partial f}{\partial \nu_{\cL_\dS}} \big|_{\partial \Omega \times \dS^1}
  \quad \text{and} \quad
  \Gamma_1^\dS f = f |_{\partial \Omega \times \dS^1}.
\]
Similarly to Proposition~\ref{prop:QBTelliptic} one verifies
that $\{L^2(\partial \Omega \times \dS^1), \Gamma_0^\dS, \Gamma_1^\dS\}$ is a
quasi boundary triple for $T_\dS \subset S_\dS^*$,
where $S_{\dS} = T_\dS \upharpoonright (\ker \Gamma_0^\dS \cap \ker \Gamma_1^\dS)$
and $A_{\rm N}^\dS \defeq T_{\dS} \upharpoonright \ker \Gamma_0^\dS$.
It follows from \eqref{trprod} that for $f(x,t)=g(x)h(t)$ with $g\in H^2 (\Omega)$
and $h\in C^\infty(\dS^1)$ we have
\begin{equation*}
  \Gamma_j^\dS f = h\, \Gamma_j g, \qquad j=0,1.
\end{equation*}
By the trace theorem we have
\[
  \ran \Gamma_0^\dS = H^{1/2}(\partial \Omega \times \dS^1) \quad \text{and} \quad
  \ran \Gamma_1^\dS = H^{3/2} (\partial \Omega \times\dS^1),
\]
and, as for Proposition~\ref{prop:QBTelliptic}, one shows that the values of the
Weyl function $M_\dS$ corresponding to the
quasi boundary triple $\{L^2(\partial \Omega \times \dS^1), \Gamma_0^\dS, \Gamma_1^\dS\}$
are bounded (non-closed) operators in $L^2(\partial\Omega\times\dS^1)$
with $\ran \overline{M_\dS (\lambda)} \subset H^1 (\partial \Omega \times \dS^1)$;
hence $\overline{M_\dS (\lambda)}$ can be regarded as a bounded operator
from $L^2(\partial \Omega \times \dS^1)$ to $H^1(\partial \Omega \times \dS^1)$
for $\lambda\in\rho(A_{\rm N}^\dS)$ by the closed graph theorem.

It is not difficult to see that the operator
$A_{\rm N}^\dS$ is bounded from below with
$\min \sigma (A_{\rm N}^\dS) = \min \sigma (A_{\rm N})$. Let $\mu < \min \sigma (A_{\rm N})$.
In the following we consider the case
\begin{equation}\label{case1}
  \lambda<\mu-1<\mu <\lambda_0 < \min \sigma(A_{\rm N}),
\end{equation}
where $\lambda_0$ is some fixed constant, and we set
\begin{equation}\label{m}
  m \defeq \sup_{\lambda\leq\mu-1}\frac{\sqrt{\mu-\lambda}}{\sqrt{\lambda_0-\lambda}-1} <\infty.
\end{equation}
By the above considerations there exists $C' > 0$, depending
on~$\lambda_0$, $\Omega$ and $\cL$, with
\begin{align}\label{eq:MSbdd}
  \|M_\dS(\lambda_0)\psi\|_{H^1(\partial\Omega\times\dS^1)}
  \le C'\|\psi\|_{L^2(\partial\Omega\times\dS^1)}, \qquad
  \psi \in H^{1/2}(\partial\Omega\times\dS^1).
\end{align}
For an arbitrary $\phi \in H^{1/2} (\partial \Omega)$ and $k \in \dN$ let us define
\begin{align*}
  f (x,t) \defeq e^{ikt}\bigl(\gamma(\lambda_0-k^2)\phi\bigr)(x), \qquad
  x \in \Omega,\; t \in \dS^1.
\end{align*}
Then $f \in \dom T_\dS$ and
\begin{align*}
  \cL_\dS f = (\lambda_0-k^2)f + k^2 f = \lambda_0 f,
\end{align*}
that is, $f \in \ker (T_\dS - \lambda_0)$.  Moreover,
\begin{align*}
  (\Gamma_0^\dS f) (x, t) = e^{ikt}\bigl(\Gamma_0\gamma(\lambda_0-k^2)\phi\bigr)(x)
  = e^{ikt}\phi(x).
\end{align*}
Hence, setting $\psi_{k,\phi}(x,t) \defeq e^{ikt}\phi(x)$ for $t\in\dS^1$
and $x \in \partial \Omega$ we have
\begin{align*}
  \bigl(M_\dS(\lambda_0)\psi_{k,\phi}\bigr)(x,t)
  &= \bigl(M_\dS(\lambda_0)\Gamma_0^\dS f\bigr)(x,t)
  = (\Gamma_1^\dS f)(x,t) \\
  & = e^{ikt}\bigl(\Gamma_1\gamma(\lambda_0-k^2)\phi\bigr)(x)
  = e^{ikt}\bigl(M(\lambda_0-k^2)\phi\bigr)(x).
\end{align*}
It follows that
\begin{align*}
  \|M_\dS(\lambda_0)\psi_{k,\phi}\|_{H^1(\partial\Omega\times\dS^1)}^2
  &\ge \left\|\frac{\partial}{\partial t}e^{ikt}M(\lambda_0-k^2)
  \phi \right\|_{L^2(\partial\Omega\times\dS^1)}^2 \\
  &=  2\pi k^2 \|M(\lambda_0-k^2)\phi\|_{L^2(\partial\Omega)}^2.
\end{align*}
Combining this estimate with~\eqref{eq:MSbdd} we obtain
\begin{align*}
  \|M(\lambda_0-k^2)\phi\|_{L^2(\partial\Omega)}
  &\le \frac{1}{\sqrt{2\pi}\,k}
  \|M_\dS(\lambda_0)\psi_{k,\phi}\|_{H^1(\partial\Omega\times\dS^1)} \\
  &\le \frac{1}{\sqrt{2\pi}\,k}\,C'\|\psi_{k,\phi}\|_{L^2(\partial\Omega\times\dS^1)}
  = \frac{C'}{k}\|\phi\|_{L^2(\partial\Omega)}
\end{align*}
for all $\phi \in H^{1/2}(\partial\Omega)$ and all $k\in\dN$.
As $\lambda < \lambda_0-1$ by \eqref{case1}, there exists $k \in \NN$
such that $\lambda_0-(k+1)^2 \le \lambda < \lambda_0-k^2$.
Since $\lambda\mapsto(M(\lambda)\varphi,\varphi)$ is non-decreasing
on $(-\infty,\min\sigma(A_{\rm N}))$ for every $\varphi\in H^{1/2}(\partial\Omega)$
by Lemma~\ref{le:Mnonneg}\,(i)
and $M(\lambda)\ge0$ by item (ii), also $\lambda\mapsto\bigl\|\ov{M(\lambda)}\bigr\|$
is non-decreasing on the same interval.  Hence
\begin{align*}
  \big\|\overline{M(\lambda)}\big\| \le \big\|\overline{M(\lambda_0-k^2)}\big\|
  \le \frac{C'}{k} \le \frac{C'}{\sqrt{\lambda_0-\lambda}-1}
  \le \frac{m\,C'}{\sqrt{\mu-\lambda}\,}
\end{align*}
for all $\lambda < \mu-1$, where the constant $m$ in \eqref{m} was used in the
last estimate.  It remains to show the estimate in \eqref{estimate}
for $\lambda\in[\mu-1,\mu)$.  Here the monotonicity of $M$ yields
\begin{equation*}
  \big\|\overline{M(\lambda)}\big\| \le \big\|\overline{M(\mu)}\big\|
  \le \frac{1}{\sqrt{\mu-\lambda}}\big\|\overline{M(\mu)}\big\|, \qquad
  \lambda\in [\mu-1,\mu),
\end{equation*}
and hence we have shown \eqref{estimate} with
$C \defeq \max\bigl\{m\,C',\|\overline{M(\mu)}\|\bigr\}$.
\end{proof}

\begin{remark}\label{rem:Konstante}
A possible choice of the constant $C = C(\cL,\Omega,\mu)$ can be read off
from the proof of the preceding proposition, namely
\begin{align*}
  C = \max \left\{\sup_{\lambda \le \mu-1}
  \frac{\sqrt{\mu-\lambda}}{\sqrt{\lambda_0-\lambda}-1}\big\|
  \overline{M_\dS(\lambda_0)}\big\|,
  \big\|\overline{M(\mu)}\big\|\right\}
\end{align*}
with $\lambda_0 \in(\mu,\min\sigma(A_{\rm N}))$, where $M_\dS(\lambda_0)$
is the Neumann-to-Dirichlet map for the differential
expression $\cL - \frac{\partial^2}{\partial t^2} - \lambda_0$ on $\Omega \times \dS^1$,
considered as an operator from the space $L^2 (\partial \Omega \times \dS^1)$ to $H^1 (\partial \Omega \times \dS^1)$.
\end{remark}

\begin{remark}
In general the assertion of Proposition~\ref{prop:Weyl_elliptic}\,(iv) does not extend
to the case $\mu = \min \sigma(A_{\rm N})$.
In fact, if $\min \sigma(A_{\rm N})$ is an eigenvalue of $A_{\rm N}$,
then $\min \sigma(A_{\rm N})$ is a (generalized) pole of order one
of the analytic (Nevanlinna) function $\lambda \mapsto \overline{M (\lambda)}$ and thus
\begin{align*}
  \big\| \overline{M(\lambda)}\big\| \sim \frac{K}{\min\sigma(A_{\rm N})-\lambda}
  \qquad \text{as} \quad \lambda\nearrow\min\sigma(A_{\rm N})
\end{align*}
for some $K > 0$.

Nevertheless, in some cases the estimate~\eqref{estimate} holds
even for $\mu = \min \sigma(A_{\rm N})$.  For instance,
in the case of the Laplacian $\cL = -\Delta$ on the half-space
\begin{align*}
  \Omega = \dR^n_+ = \left\{(x',x_n)^\top\in\dR^n: x'\in\dR^{n-1},\,x_n > 0\right\}
\end{align*}
with boundary $\partial\Omega = \dR^{n-1}$ one has $\sigma(A_{\rm N}) = [0,\infty)$,
and the Neumann-to-Dirichlet map can be calculated explicitly, namely,
\begin{equation*}
  \ov{M(\lambda)} = (-\Delta_{\dR^{n-1}}-\lambda)^{-1/2}, \qquad \lambda < 0,
\end{equation*}
where $-\Delta_{\dR^{n-1}}$ is the self-adjoint Laplacian in $L^2(\dR^{n-1})$;
see, e.g.\ \cite[(9.65)]{G09}. % or~\cite{FL08,LR12}.
From this representation it follows that
\[
  \big\|\ov{M(\lambda)}\big\| = \frac{1}{\sqrt{-\lambda}\,}\,, \qquad \lambda < 0.
\]
We remark that the asymptotic behaviour of the Neumann-to-Dirichlet map
for the Laplacian on a half-space was also used in the context of spectral theory in,
e.g.~\cite{LR12}.
\end{remark}

The following theorem shows the self-adjointness of elliptic differential operators
with generalized Robin boundary conditions and yields a bound for the
minima of their spectra. Note that the $\gamma$-field corresponding to the
quasi boundary triple in Proposition~\ref{prop:QBTelliptic} is the
mapping $\varphi\mapsto \gamma(\lambda)\varphi=f$, where $f\in\dom T$
is the unique solution of the boundary value problem $\cL f=\lambda f$,
$\Gamma_0 f=\frac{\partial f}{\partial\nu_\cL}|_{\partial\Omega}=\varphi\in H^{1/2}(\partial\Omega)$.

\begin{theorem}\label{thm:mainApplication}
Let $B$ be a self-adjoint operator in $L^2 (\partial \Omega)$ which is bounded
from above and assume that $H^{1/2}(\partial\Omega) \subset \dom B$
and $B(H^1(\partial\Omega)) \subset H^{1/2}(\partial\Omega)$.  Then the operator
\begin{equation*}
  A_{[B]}f = \cL f, \qquad
  \dom A_{[B]} = \left\{f \in H^2(\Omega): \frac{\partial f}{\partial\nu_\cL}
  \Big|_{\partial \Omega} = B f |_{\partial \Omega}\right\},
\end{equation*}
is self-adjoint in $L^2 (\Omega)$ and
\begin{equation*}
  (A_{[B]}-\lambda)^{-1}=(A_{\rm N}-\lambda)^{-1}
  +\gamma(\lambda)\bigl(I-BM(\lambda)\bigr)^{-1}B\gamma(\overline\lambda)^*
\end{equation*}
holds for all $\lambda\in\rho(A_{[B]})\cap \rho(A_{\rm N})$, where $\gamma$ is
the $\gamma$-field corresponding to the quasi boundary triple
in Proposition~\ref{prop:QBTelliptic} and $M$ is the Neumann-to-Dirichlet map.
Moreover, the self-adjoint operator $A_{[B]}$
is bounded from below  with lower bound satisfying
\begin{equation*}
  \min\sigma(A_{[B]}) \ge \mu-(C\|B_+\|)^2
\end{equation*}
for each $\mu < \min\sigma(A_{\rm N})$, where $C = C (\Omega, \cL, \mu)$
is given in Remark~\ref{rem:Konstante} and $B_+$ is the positive part of~$B$;
see \eqref{eq:negPos}.  Moreover, if $B\le0$
then $\min \sigma(A_{[B]}) \geq \min \sigma(A_{\rm N})$.
\end{theorem}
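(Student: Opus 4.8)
The plan is to apply Theorem~\ref{thm:lowerBound} directly, using the quasi boundary triple $\{L^2(\partial\Omega),\Gamma_0,\Gamma_1\}$ from Proposition~\ref{prop:QBTelliptic} and the detailed information about its Weyl function $M$ (the Neumann-to-Dirichlet map) collected in Proposition~\ref{prop:Weyl_elliptic}. First I would verify that all the hypotheses of Theorem~\ref{thm:lowerBound} are met. The operator $A_0=A_{\rm N}$ is bounded from below by $\essinf a$ by Proposition~\ref{prop:QBTelliptic}\,(ii), and $M(\lambda)$ is bounded for every $\lambda\in\rho(A_{\rm N})$ by Proposition~\ref{prop:Weyl_elliptic}\,(ii). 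The decay estimate~\eqref{estimate}, which holds with $\alpha=\tfrac12$ and the constant $C=C(\cL,\Omega,\mu)$ from Remark~\ref{rem:Konstante}, supplies condition~(i) of Theorem~\ref{thm:lowerBound} as well as the quantitative bound~\eqref{decay} needed for part~(b) of that theorem.

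Next I would check the four remaining abstract conditions (ii)--(v) of Theorem~\ref{thm:lowerBound} against the concrete hypotheses on $B$. Condition~(ii), namely $\ran\ov{M(\lambda)}^{1/2}\subset\dom B$ for $\lambda<\min\sigma(A_{\rm N})$, follows from Proposition~\ref{prop:Weyl_elliptic}\,(iii), which identifies $\ran\ov{M(\lambda)}^{1/2}=H^{1/2}(\partial\Omega)$, together with the assumption $H^{1/2}(\partial\Omega)\subset\dom B$. Condition~(iii), that $B(\ran\ov{M(\lambda)})\subset\ran\Gamma_0=H^{1/2}(\partial\Omega)$, follows from Proposition~\ref{prop:Weyl_elliptic}\,(ii), which gives $\ran\ov{M(\lambda)}\subset H^1(\partial\Omega)$, combined with the assumption $B(H^1(\partial\Omega))\subset H^{1/2}(\partial\Omega)$. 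Condition~(iv), $\ran\Gamma_1\subset\dom B$, holds because $\ran\Gamma_1=H^{3/2}(\partial\Omega)\subset H^{1/2}(\partial\Omega)\subset\dom B$. For condition~(v) the cleanest route is to invoke the second alternative, namely that $A_1=A_{\rm D}$ is self-adjoint, which is already recorded in Proposition~\ref{prop:QBTelliptic}\,(ii); alternatively one could verify $B(\ran\Gamma_1)=B(H^{3/2}(\partial\Omega))\subset H^{1/2}(\partial\Omega)$ using $H^{3/2}(\partial\Omega)\subset H^1(\partial\Omega)$ and the mapping assumption on $B$.

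Once these hypotheses are in place, Theorem~\ref{thm:lowerBound} immediately yields self-adjointness of $A_{[B]}$, the resolvent formula, the lower semiboundedness, and both the estimate $\min\sigma(A_{[B]})\ge\mu-(C\|B_+\|)^{1/\alpha}$ with $\alpha=\tfrac12$ (giving the stated exponent $2$) and the sign conclusion $\min\sigma(A_{[B]})\ge\min\sigma(A_{\rm N})$ when $B\le0$. I would also remark that the domain description in the theorem matches \eqref{ab2}: the condition $\frac{\partial f}{\partial\nu_\cL}|_{\partial\Omega}=Bf|_{\partial\Omega}$ is exactly $\Gamma_0 f=B\Gamma_1 f$ under the present choice of boundary maps.

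Honestly, the proof is essentially a bookkeeping exercise in matching the concrete Sobolev-space facts to the abstract hypotheses, so there is no single hard obstacle at this stage; all the genuine work has been front-loaded into Propositions~\ref{prop:QBTelliptic} and~\ref{prop:Weyl_elliptic}, especially the decay estimate in part~(iv) and the range identity~\eqref{eq:MRanSquareroot} in part~(iii). The only point requiring a little care is ensuring that the mapping hypotheses on $B$ are exactly strong enough to cover conditions~(ii)--(v) simultaneously; in particular one must not overlook that $B$ need only map the smoother spaces $H^1(\partial\Omega)$ and $H^{3/2}(\partial\Omega)$ into $H^{1/2}(\partial\Omega)$, while being merely defined on all of $H^{1/2}(\partial\Omega)$, and that the self-adjointness of $A_{\rm D}$ conveniently removes any need to control $B$ on the full range of $\Gamma_1$.
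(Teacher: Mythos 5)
Your proposal is correct and follows exactly the paper's proof, which likewise just observes that Propositions~\ref{prop:QBTelliptic} and~\ref{prop:Weyl_elliptic} verify all hypotheses of Theorem~\ref{thm:lowerBound} (with $\alpha=\tfrac12$ in the decay estimate~\eqref{estimate}) and then invokes that theorem; your more detailed matching of conditions (i)--(v) to the Sobolev-space facts, including using the self-adjointness of $A_{\rm D}$ for condition~(v), is precisely the bookkeeping the paper leaves implicit.
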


\begin{proof}
Propositions~\ref{prop:QBTelliptic} and \ref{prop:Weyl_elliptic} show that
all assumptions of Theorem~\ref{thm:lowerBound} are satisfied.
Hence the latter yields all assertions of the current theorem.
\end{proof}

\begin{remark}
The boundary conditions discussed in Theorem~\ref{thm:mainApplication} contain
classical Robin boundary conditions.
Here one chooses $B f = b f$, $f \in L^2 (\partial \Omega)$, for some
suitable function $b:\partial\Omega\to\dR$ satisfying the
assumptions in the theorem; in this case
\begin{align*}
  B_+ f = b_+ f, \qquad f \in \dom B,
\end{align*}
where $b_+$ is the positive part of the function $b$.
Note also that not every Robin boundary condition with a real valued function $b$  leads to a
self-adjoint realization.  An example with an unbounded $b$ and its physical motivation
were  discussed in \cite[Section~3]{ES88}; see also
\cite{B09,MR09} for related problems.
\end{remark}

Let us formulate a consequence of the previous theorem:
under the assumptions of Theorem~\ref{thm:mainApplication} the operators
\begin{align*}
  A_{[\omega B]} f = \cL f , \quad
  \dom A_{[\omega B]} = \left\{ f \in H^2 (\Omega) :
  \frac{\partial f}{\partial \nu_\cL} \big|_{\partial \Omega}
  = \omega B f |_{\partial \Omega} \right\},
\end{align*}
with $\omega \geq 0$ are self-adjoint in $L^2 (\Omega)$ and bounded from below,
and as in Corollary~\ref{cor:Asymptotics}\,(a) one can derive the
following asymptotic estimate for the lower bound of $\sigma(A_{[\omega B]})$
when the coupling constant $\omega$ tends to $+ \infty$.

\begin{corollary}
Let the assumptions of Theorem~\ref{thm:mainApplication} be satisfied.  Then
\begin{equation*}
  \bigl|\min\sigma(A_{[\omega B]})\bigr| = \cO(|\omega|^2) \quad \text{as} \quad
  \omega \to +\infty.
\end{equation*}
\end{corollary}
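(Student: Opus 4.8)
The plan is to deduce the statement directly from Theorem~\ref{thm:mainApplication} in combination with the mechanism already employed in Corollary~\ref{cor:Asymptotics}\,(a). First I would check that the hypotheses of Theorem~\ref{thm:mainApplication} hold for the rescaled parameter $\omega B$ for every $\omega\ge0$: the operator $\omega B$ is again self-adjoint and bounded from above, one has $H^{1/2}(\partial\Omega)\subset\dom(\omega B)=\dom B$, and $\omega B\bigl(H^1(\partial\Omega)\bigr)=B\bigl(H^1(\partial\Omega)\bigr)\subset H^{1/2}(\partial\Omega)$. Consequently each $A_{[\omega B]}$ is self-adjoint and bounded from below, and, fixing any $\mu<\min\sigma(A_{\rm N})$ with the constant $C=C(\Omega,\cL,\mu)$ from Remark~\ref{rem:Konstante}, Theorem~\ref{thm:mainApplication} supplies the lower estimate
\[
  \min\sigma(A_{[\omega B]}) \ge \mu-\bigl(C\,\|(\omega B)_+\|\bigr)^2 .
\]

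Next I would identify the positive part of the scaled parameter. Since $\omega\ge0$, multiplication by $\omega$ preserves the sign of the spectrum in the decomposition \eqref{eq:negPos}, so that $(\omega B)_+=\omega B_+$ and hence $\|(\omega B)_+\|=\omega\,\|B_+\|$. Substituting this into the previous inequality yields
\[
  \min\sigma(A_{[\omega B]}) \ge \mu - C^2\,\|B_+\|^2\,\omega^2 ,
\]
which controls how far the bottom of the spectrum can descend: the negative part of $\min\sigma(A_{[\omega B]})$ grows at most like $\cO(\omega^2)$ as $\omega\to+\infty$. Here the exponent $2$ is exactly $1/\alpha$ with $\alpha=\tfrac12$, reflecting the decay rate of the Neumann-to-Dirichlet map established in Proposition~\ref{prop:Weyl_elliptic}\,(iv).

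It remains to bound $\min\sigma(A_{[\omega B]})$ from above by a constant independent of $\omega$, and this is the only point that needs a genuinely separate argument. For this I would use that $A_{[\omega B]}$ is a self-adjoint extension of the symmetric operator $S$ from \eqref{eq:Selliptic} and that the Dirichlet operator $A_{\rm D}$ is the Friedrichs extension of $S$ (Proposition~\ref{prop:Weyl_elliptic}\,(i)). Since the Friedrichs extension has the largest lower bound among all lower semibounded self-adjoint extensions of $S$, one obtains $\min\sigma(A_{[\omega B]})\le\min\sigma(A_{\rm D})<\infty$ for every $\omega\ge0$. Combining the two-sided bound
\[
  \mu - C^2\,\|B_+\|^2\,\omega^2 \le \min\sigma(A_{[\omega B]}) \le \min\sigma(A_{\rm D})
\]
and letting $\omega\to+\infty$, the quadratic term dominates and delivers $\bigl|\min\sigma(A_{[\omega B]})\bigr|=\cO(|\omega|^2)$. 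I expect the comparison with the Friedrichs extension to be the main (indeed the only nontrivial) obstacle; the remaining steps are a routine specialization of the abstract results, with all the essential analytic work already carried out in Theorem~\ref{thm:mainApplication} and Proposition~\ref{prop:Weyl_elliptic}\,(iv).
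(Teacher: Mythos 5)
Your proof is correct and follows essentially the same route as the paper: the paper derives this corollary exactly as in Corollary~\ref{cor:Asymptotics}\,(a), i.e.\ the lower bound $\min\sigma(A_{[\omega B]})\ge\mu-(C\|(\omega B)_+\|)^2$ with $(\omega B)_+=\omega B_+$ (the bound \eqref{Bbound} with $\alpha=\tfrac12$), combined with the $\omega$-independent upper bound $\min\sigma(A_{[\omega B]})\le\min\sigma(A_{\rm F})<\infty$ via the Friedrichs extension of $S$. Your only cosmetic addition is identifying $A_{\rm F}$ with the Dirichlet operator $A_{\rm D}$ in this PDE setting (as noted in the proof of Proposition~\ref{prop:Weyl_elliptic}\,(i)), which is accurate but not needed for the paper's abstract version of the argument.
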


\begin{remark}\label{formremark}
We point out that the operator $A_{[B]}$ in Theorem~\ref{thm:mainApplication}
can also be defined as the self-adjoint operator representing
the closed, densely defined, symmetric and lower-semibounded sesquilinear form
\begin{equation*}
  \fra_B [f,g] \defeq
  \fra[f,g]-\bigl(Bf|_{\partial\Omega},g|_{\partial\Omega}\bigr),
  \qquad \dom\fra_B \defeq H^1(\Omega),
\end{equation*}
where $\fra$ is defined as in \eqref{defa} and~$(\cdot, \cdot)$ denotes
the inner product in~$L^2 (\partial \Omega)$.
With this approach additional arguments are needed to
show $H^2$-regularity of the functions in $\dom A_{[B]}$.
Note that the decomposition $B= B_+ - B_-$ yields the estimate
\begin{equation}
\label{fraBlowerbound1}
  \mathfrak{a}_B [f]\geq \mathfrak a[f]
  -\bigl(B_+ f|_{\partial\Omega},f|_{\partial\Omega}\bigr), \qquad
  f\in H^1(\Omega).
\end{equation}
Moreover, for any $\eps>0$
there exists a constant $\beta(\eps)>0$ such that
\begin{equation*}
  \|f|_{\partial\Omega}\|^2
  \le \eps \|\nabla f\|^2 + \beta(\eps)\|f\|^2, \qquad
  f\in H^1(\Omega);
\end{equation*}
see, e.g.\ \cite[Lemma~3.1]{F67}.  According to this estimate we have
\begin{equation*}
  \bigl(B_+ f|_{\partial\Omega},f|_{\partial\Omega}\bigr)_{\partial \Omega}
  \le \|B_+\|\|f|_{\partial\Omega}\|^2\le\eps\|B_+\|\|\nabla f\|^2+ \beta(\eps)\|B_+\|\|f\|^2
\end{equation*}
for all $f\in H^1(\Omega)$.
The ellipticity of $\cL$ yields $E \|\nabla f\|^2 \leq \fra_1[f]$
for all $f\in H^1(\Omega)$, where~$E$ is chosen as in~\eqref{ellipticity}.
Thus, if $B_+ \neq 0$, then for $\eps = \tfrac{E}{\|B_+\|}$ and any $f\in H^1(\Omega)$
we obtain the relation
\begin{align*}
  \fra_B [f]
  &\ge
  E \|\nabla f\|^2 + \essinf a \|f\|^2
  - E\|\nabla f\|^2 - \beta \big(\tfrac{E}{\|B_+\|} \big)\|B_+\| \|f\|^2 \\
  &= \Bigl(\essinf a - \beta \bigl(\tfrac{E}{\|B_+\|}\bigr)\|B_+\|\Bigr)\|f\|^2,
\end{align*}
where we used \eqref{fraBlowerbound1}. In particular,
\begin{align}\label{eq:alternativeEstimate}
  \min\sigma(A_{[B]}) \ge \essinf a - \beta \bigl(\tfrac{E}{\|B_+\|}\bigr)\|B_+\|.
\end{align}
The possible choice of the constant $\beta (\eps)$ depends on the
domain $\Omega$ and has been investigated for certain classes of domains.
For instance, if $\Omega$ is a bounded domain (with Lipschitz boundary),
then one can choose $\beta$ such that $\beta(\eps) = c/\eps$ for
sufficiently small $\eps$ and some $c > 0$;
see \cite[Lemma~2.5]{GM08}. In this case \eqref{eq:alternativeEstimate} reads
\begin{equation*}
  \min\sigma(A_{[B]}) \ge \essinf a - \frac{c}{E} \|B_+\|^2
\end{equation*}
when $\|B_+\|$ is sufficiently large.
\end{remark}

% *******************************************************************

\end{document}